\def\Xint#1{\mathchoice
{\XXint\displaystyle\textstyle{#1}}%
{\XXint\textstyle\scriptstyle{#1}}%
{\XXint\scriptstyle\scriptscriptstyle{#1}}%
{\XXint\scriptscriptstyle\scriptscriptstyle{#1}}%
\!\int}
\def\XXint#1#2#3{{\setbox0=\hbox{$#1{#2#3}{\int}$ }
\vcenter{\hbox{$#2#3$ }}\kern-.6\wd0}}
\def\dashint{\Xint-}
\newcommand{\partialbar}{\bar{\partial}}
\newcommand{\Ric}{\mathrm{Ric}}
\newcommand{\Vol}{\mathrm{Vol}}
\newcommand{\Aut}{\mathrm{Aut}}
\newtheorem{thm}{Theorem}[section]
\newtheorem{prop}{Proposition}[section]
\newtheorem{lmm}{Lemma}[section]
\newtheorem{conj}{Conjecture}[section]
\newtheorem{pbm}{Problem}[section]
\theoremstyle{remark} 
\newtheorem*{rmk}{Remark}
\title{On the \texorpdfstring{$L^2 $}{Lg} volume of Bergman spaces }
\author{Shengxuan Zhou}
\address{Beijing International Center for Mathematical Research\\
Peking University\\
Beijing\\ 100871\\ China}
\email{zhoushx19@pku.edu.cn}
\begin{document}

\begin{abstract}
In this paper, we show that the Calabi volume and Mabuchi volume of Bergman spaces on the product of a projective manifold and a projective space is infinite. Our result is inspired by a conjecture of Shiffman-Zelditch in \cite{shifzel1}.
\end{abstract}

\maketitle

\tableofcontents

\section{Introduction}
\label{intro} 

Let $ M $ be a $n$-dimensional projective manifold, $L$ be a very ample line bundle on $M$, and $\omega_0 \in c_1 (L) $ be a K\"ahler metric on $M$. The pair $(M,L)$ is called a polarized manifold. Then the space of K\"ahler metrics and the space of K\"ahler potentials can be defined as follows:
\begin{equation} \label{definitionspacekahlermetrics}
    \mathcal{K} = \left\lbrace \omega :\;\; \textrm{$d\omega =0 $, and } \omega >0 \right\rbrace ,
\end{equation}
and
\begin{equation} \label{definitionspacekahlerpotentials}
    \mathcal{H}_{[\omega_0 ]} = \left\lbrace f \in C^\infty (M;\mathbb{R}) :\;\; \omega_f = \omega_0 + \sqrt{-1} \partial \partialbar f >0 \right\rbrace .
\end{equation}
By $\partial\partialbar$-lemma, one can see that $\omega' \in [\omega_0] = c_1 (L) $ if and only if $\omega'=\omega_f$ for some $f\in \mathcal{H}_{[\omega_0 ]} $, and such $f$ is unique up to an additive constant. Hence $ \mathcal{K}_{[\omega_0 ]} = \mathcal{K} \cap [\omega_0 ] \cong \mathcal{H}_{[\omega_0 ]} / \mathbb{R} $. For example, see \cite[Chapter VI, Lemma 8.6]{dm1} or \cite[Corollary 2.2]{tgbook1}.

Since $L$ is very ample, for any $k\in\mathbb{N}$, one can construct an embedding $M \to \mathbb{C}P^{N_k } $ by a basis of $H^0 (M,L^k ) $, where $N_k = \dim H^0 (M,L^k ) -1 $. Now we can define the space of Bergman metrics as following.
\begin{eqnarray} \label{definitionbergmanspace}
    \mathcal{B}_{M,L^k} = \left\lbrace \frac{1}{k} \gamma^* \omega_{FS} ,\;\; \gamma : M \to \mathbb{C}P^{N_k } \textrm{ given by a basis of $ H^0 (M,L^k) $ } \right\rbrace ,
\end{eqnarray}
where $\omega_{FS} $ is the Fubin-Study metric of $\mathbb{C}P^{N_k} $. Clearly, $\mathcal{B}_{M,L^k} \subset \mathcal{K}_{[\omega_0 ]} $, $\forall k\in\mathbb{N}$. The space of Bergman metrics will also be abbreviated as $\mathcal{B}_{k}$ when we do not emphasize the manifold $M$ and the line bundle $L$.

The study of the approximation of subspaces $ \mathcal{B}_{M,L^k} $ to space $\mathcal{K}_{[\omega_0 ]}$ is an important topic in K\"ahler geometry. In the seminal work \cite{tg1}, Tian utilized his peak section method to prove that Bergman metrics converge to the original polarized metric in the $C^2$-topology. Consequently, $\mathcal{K}_{[\omega_0 ]} $ can be approximation by $\mathcal{B}_{L,k} $ in the $C^2$-topology. Following this approach, Ruan \cite{wdr1} proved that this convergence holds in the $C^\infty $ sense. Later, Zelditch \cite{sz1}, also Catlin \cite{cat1} independently, used the Szeg\"o kernel to obtain an alternative proof of the $C^\infty $-convergence of Bergman metrics. Furthermore, they provided an asymptotic expansion of Bergman kernel function, which is the K\"ahler potential of the Bergman metric. Such an asymptotic expansion is called the Tian-Yau-Zelditch expansion. This expansion can be also obtained by Tian's peak section method, see Liu-Lu \cite{liuzql1}. By using the heat kernel, Dai-Liu-Ma \cite{dailiuma1} gave another proof of the Tian-Yau-Zelditch expansion (see also Ma-Marinescu’s book \cite{mamari1}). There are many important applications of the Tian–Yau–Zelditch expansion, for example, \cite{don1} and \cite{kwz2}. 

There are two natural metrics on the space $ \mathcal{K}_{[\omega_0 ]} $.

The Calabi metric \cite{calabi1, calabi2} serves as the natural $L^2$ metric on $\mathcal{K}_{[\omega_0 ]}$ defined by
\begin{eqnarray} \label{definitioncalabimetric}
    G_{Ca, \mathcal{K}_{[\omega_0 ]}} (\dot{\varphi} ,\dot{\phi}) = \int_{M} \Delta_\omega \dot{\varphi} \Delta_\omega \dot{\phi} dV_\omega = \int_{M} \left(\mathrm{tr}_{\omega } \sqrt{-1} \partial \partialbar \dot{\varphi} \right) \cdot \left( \mathrm{tr}_{\omega } \sqrt{-1} \partial \partialbar \dot{\phi} \right) dV_\omega ,
\end{eqnarray}
where $\omega \in [\omega_0 ] $, and $\dot{\varphi} ,\dot{\phi} \in T_\omega \mathcal{K}_{[\omega_0 ]} \cong C^\infty (M,\mathbb{R}) /\mathbb{R} $. It is known that the sectional curvatures of the Calabi metric $G_{Ca, \mathcal{K}_{[\omega_0 ]}} $ on $ \mathcal{K}_{[\omega_0 ]} $ are all equal to $1 $, and $ \mathcal{K}_{[\omega_0 ]} $ does not have conjugate points with respect to the Calabi metric \cite{calamai1}. Thus, the infinite-dimensional Riemannian manifold $( \mathcal{K}_{[\omega_0 ]} , G_{Ca, \mathcal{K}_{[\omega_0 ]}} )$ the space is a portion of an infinite dimensional sphere of constant curvature $1$.

Another natural $L^2$ metric on $\mathcal{K}_{[\omega_0 ]}$ is the Mabuchi metric arised in \cite{mabuchi1}. At first, one can defined an $L^2$ metric on $\mathcal{H}_{[\omega_0 ]}$ by
\begin{eqnarray} \label{definitionmabuchimetric1}
    {G}_{Ma, \mathcal{H}_{[\omega_0 ]}} (\dot{\varphi} ,\dot{\phi}) = \int_{M} \dot{\varphi} \dot{\phi} dV_{\omega_f} ,
\end{eqnarray}
where $f \in \mathcal{H}_{[\omega_0 ]} $, $\omega_f = \omega_0 + \sqrt{-1} \partial \partialbar f >0 $, and $\dot{\varphi} ,\dot{\phi} \in T_f \mathcal{H}_{[\omega_0 ]} \cong C^\infty (M,\mathbb{R}) $. 

Clearly, ${G}_{Ma, \mathcal{H}_{[\omega_0 ]}}$ is invariant under the action $A_c : \mathcal{H}_{[\omega_0 ]} \to \mathcal{H}_{[\omega_0 ]} $, $A_c (f) = f+c$, $\forall c\in \mathbb{R}$. Hence (\ref{definitionmabuchimetric1}) gives an $L^2$ metric ${G}_{Ma, \mathcal{K}_{[\omega_0 ]}}$ on $ \mathcal{K}_{[\omega_0 ]} \cong \mathcal{H}_{[\omega_0 ]} / \mathbb{R} $ such that the quotient map $ ( \mathcal{H}_{[\omega_0 ]} , {G}_{Ma, \mathcal{H}_{[\omega_0 ]}} ) \to ( \mathcal{K}_{[\omega_0 ]} , {G}_{Ma, \mathcal{K}_{[\omega_0 ]}} ) $ becomes a Riemannian submersion. By definition, we have
\begin{eqnarray} \label{definitionmabuchimetric2}
    {G}_{Ma, \mathcal{K}_{[\omega_0 ]}} (\dot{\varphi} ,\dot{\phi}) = \int_{M} \left( \dot{\varphi} - \dashint_M \dot{\varphi} dV_{\omega } \right) \left( \dot{\phi} - \dashint_M \dot{\phi} dV_{\omega } \right) dV_{\omega } ,
\end{eqnarray}
where $\omega \in \mathcal{K}_{[\omega_0 ]} $, $\dot{\varphi} ,\dot{\phi} \in T_\omega \mathcal{K}_{[\omega_0 ]} \cong C^\infty (M,\mathbb{R}) /\mathbb{R} $, and $\dashint_M \dot{\phi} dV_{\omega } = \frac{1}{\Vol_\omega (M)} \int_M \dot{\phi} dV_{\omega } $.

Since $[ \omega_0 ] = c_1 (L)\in H^2 ( M ; \mathbb{Z} ) $, we have $\mathcal{B}_{M,L^k} \subset \mathcal{K}_{ [ \omega_0 ] } $. Hence one can obtain a Riemannian metric on $\mathcal{B}_{M,L^k} $ by the restriction of the Calabi metric or the Mabuchi metric on $\mathcal{B}_{M,L^k}$. Let ${G}_{Ca, M,L^k}$ and ${G}_{Ma, M,L^k}$ denote the restriction of the Calabi metric and the Mabuchi metric on $\mathcal{B}_{M,L^k}$, respectively. Then ${G}_{Ca, M,L^k}$ and ${G}_{Ma, M,L^k}$ are invariant under the action of $\Aut (M)$ in some sense. In fact, we have the following property.

\begin{prop}\label{propactioninv}
Let $ M $ be a $n$-dimensional projective manifold, $L$ be a very ample line bundle on $M$, and $\gamma\in\Aut (M)$. Then the pullback 
\begin{eqnarray} \label{pullbackbergmanspaces}
    \gamma^* : \mathcal{B}_{M,L^k} \to \mathcal{B}_{M,\gamma^* L^k}
\end{eqnarray}
satisfies $\gamma^* {G}_{Ca, M,L^k} = {G}_{Ca, M,\gamma^* L^k} $ and $\gamma^* {G}_{Ma, M,L^k} = {G}_{Ma, M,\gamma^* L^k} $.
\end{prop}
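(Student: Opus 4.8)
The plan is to unwind the definitions and check that the pullback $\gamma^*$ intertwines all the data entering the Calabi and Mabuchi metrics. First I would recall that an automorphism $\gamma \in \Aut(M)$ induces an isomorphism of line bundles $\gamma^* L^k \cong (\gamma^{-1})^* L^k$ (more precisely, $\gamma$ lifts to a bundle map covering itself), and hence a linear isomorphism $\gamma^* : H^0(M, L^k) \to H^0(M, \gamma^* L^k)$ carrying bases to bases. Given a basis $\{s_j\}$ of $H^0(M,L^k)$ defining an embedding $\iota : M \to \mathbb{C}P^{N_k}$, the pulled-back basis $\{\gamma^* s_j\}$ defines the embedding $\iota \circ \gamma : M \to \mathbb{C}P^{N_k}$. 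Therefore $\gamma^*\bigl(\tfrac1k \iota^* \omega_{FS}\bigr) = \tfrac1k (\iota\circ\gamma)^* \omega_{FS}$, which shows that the map $(\ref{pullbackbergmanspaces})$ is well-defined and is in fact a bijection (with inverse induced by $\gamma^{-1}$).

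Next I would observe the pointwise identity that for any $\omega \in \mathcal{K}_{[\omega_0]}$ and any $\dot\varphi \in C^\infty(M,\mathbb{R})$ one has $\gamma^*(\Delta_\omega \dot\varphi) = \Delta_{\gamma^*\omega}(\gamma^*\dot\varphi)$ and $\gamma^*(dV_\omega) = dV_{\gamma^*\omega}$, since $\gamma$ is a diffeomorphism and both the Laplacian and the volume form are built metrically from $\omega$. A tangent vector to $\mathcal{B}_{M,L^k}$ at a Bergman metric $\omega_B$ is represented by some $\dot\varphi \in C^\infty(M,\mathbb{R})/\mathbb{R}$; its image under $d(\gamma^*)$ is $\gamma^*\dot\varphi$ at the point $\gamma^*\omega_B \in \mathcal{B}_{M,\gamma^*L^k}$. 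Then the change-of-variables formula $\int_M h \, dV_{\gamma^*\omega} = \int_M \gamma^*(h\, dV_\omega) = \int_M (\gamma^{-1})^* h \, dV_\omega$ — or, written the other way, $\int_M \gamma^* g \, dV_{\gamma^*\omega} = \int_M g\, dV_\omega$ — immediately yields
\begin{equation*}
    G_{Ca, M, \gamma^* L^k}\bigl(\gamma^*\dot\varphi, \gamma^*\dot\phi\bigr)
    = \int_M (\gamma^*\Delta_\omega\dot\varphi)(\gamma^*\Delta_\omega\dot\phi)\, dV_{\gamma^*\omega}
    = \int_M (\Delta_\omega\dot\varphi)(\Delta_\omega\dot\phi)\, dV_\omega
    = G_{Ca, M, L^k}\bigl(\dot\varphi, \dot\phi\bigr),
\end{equation*}
which is the asserted identity $\gamma^* G_{Ca,M,L^k} = G_{Ca,M,\gamma^*L^k}$.

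For the Mabuchi metric I would argue identically, now using $(\ref{definitionmabuchimetric2})$: the averaging operator is equivariant, $\dashint_M (\gamma^*\dot\varphi)\, dV_{\gamma^*\omega} = \dashint_M \dot\varphi\, dV_\omega$ (because $\Vol_{\gamma^*\omega}(M) = \Vol_\omega(M)$ and the numerator transforms by the same change of variables), so the normalized functions $\gamma^*\dot\varphi - \dashint_M \gamma^*\dot\varphi\, dV_{\gamma^*\omega}$ and $\gamma^*\bigl(\dot\varphi - \dashint_M \dot\varphi\, dV_\omega\bigr)$ coincide, and one more application of change of variables gives $\gamma^* G_{Ma,M,L^k} = G_{Ma,M,\gamma^*L^k}$.

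I do not expect any serious obstacle here; the statement is essentially naturality of metric-theoretic constructions under diffeomorphisms combined with the elementary fact that the Bergman space is defined purely in terms of $H^0(M,L^k)$ and the Fubini–Study form. The only point requiring a little care is the bookkeeping of line-bundle pullbacks — making sure that $\gamma^*$ on sections is genuinely a bijection $H^0(M,L^k) \to H^0(M,\gamma^*L^k)$ sending bases to bases, so that $(\ref{pullbackbergmanspaces})$ makes sense as written — but this is standard, and once it is in place the two metric identities follow from the same two lemmas (equivariance of $\Delta$, $dV$, and change of variables).
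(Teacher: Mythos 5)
Your proposal is correct and follows essentially the same route as the paper: show $\gamma^*\omega\in\mathcal{B}_{M,\gamma^*L^k}$ via the composed embedding $\iota\circ\gamma$, then use naturality of $\Delta_\omega$ and $dV_\omega$ under the biholomorphism $\gamma$ together with change of variables to transport the Calabi and Mabuchi integrands. The only cosmetic difference is that the paper phrases well-definedness via the embedding $F_\omega$ with $F_\omega^*\mathcal{O}(1)=L^k$ rather than via the induced map on bases of $H^0(M,L^k)$, which amounts to the same thing.
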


Let $\mu_{Ca} $ and $\mu_{Ma} $ be the measures on $\mathcal{B}_{M,L^k}$ corresponding to the Riemannian metrics $ {G}_{Ca, M,L^k} $ and $ {G}_{Ma, M,L^k} $, respectively. Combining Proposition \ref{propactioninv} with the parameterization of $\mathcal{B}_{M,L^k}$ (see \cite[Section 5]{shifzel1} or Section \ref{matrix}), we can obtain the following result.

\begin{thm}
\label{thmexampleCPN}
Let $M $ be a projective manifold, and $L $ be a very ample line bundle on $M$. Suppose $M=M_1 \times \mathbb{C}P^n $, where $M_1$ is a projective manifold (which can be a single point) and $n\in\mathbb{N}$. Then for any $k\in\mathbb{N}$, $\mu_{Ca} ( \mathcal{B}_{M, L^k} ) = \mu_{Ma} ( \mathcal{B}_{M, L^k} ) = \infty $.
\end{thm}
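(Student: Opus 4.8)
The plan is to...The plan is to exploit the large automorphism group of $M=M_1\times\mathbb{C}P^n$ together with the homogeneous-space description of $\mathcal{B}_{M,L^k}$. First I would reduce to the case $L\cong p_1^*L_1\otimes p_2^*\mathcal{O}(d)$ with $d\geq 1$ (this is forced by $\operatorname{Pic}(M_1\times\mathbb{C}P^n)=\operatorname{Pic}(M_1)\oplus\mathbb{Z}\cdot[\mathcal{O}(1)]$ and ampleness), so that $H^0(M,L^k)\cong H^0(M_1,L_1^k)\otimes H^0(\mathbb{C}P^n,\mathcal{O}(dk))$ by K\"unneth. Then I would recall the parameterization of $\mathcal{B}_{M,L^k}$ from \cite[Section 5]{shifzel1} (or Section \ref{matrix}): fixing an embedding $\iota_0$ of $M$ by a basis of $H^0(M,L^k)$ identifies $\mathcal{B}_{M,L^k}$ with the homogeneous space $X=PU(N_k+1)\backslash PGL(N_k+1,\mathbb{C})$ via $[g]\mapsto\frac1k(g\circ\iota_0)^*\omega_{FS}$ (the point being that $\gamma^*\omega_{FS}$ determines the projective frame up to a unitary transformation), and under this identification, for $\gamma\in\mathrm{Aut}(M)$ with $\gamma^*L^k\cong L^k$ the map $\gamma^*$ becomes right translation by the class of $\gamma^*$ acting on $H^0(M,L^k)$.

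Next I would use the $\mathbb{C}P^n$-factor to produce a one-parameter group of isometries. For $g\in\mathrm{Aut}(\mathbb{C}P^n)=PGL(n+1,\mathbb{C})$ set $\gamma_g=\mathrm{id}_{M_1}\times g\in\mathrm{Aut}(M)$; since $g^*\mathcal{O}(d)\cong\mathcal{O}(d)$ we get $\gamma_g^*L^k\cong L^k$, so Proposition \ref{propactioninv} gives that $\gamma_g^*$ is an isometry of $(\mathcal{B}_{M,L^k},G_{Ca,M,L^k})$ and of $(\mathcal{B}_{M,L^k},G_{Ma,M,L^k})$, hence preserves $\mu_{Ca}$ and $\mu_{Ma}$. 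Taking $g_t=[\mathrm{diag}(e^t,1,\dots,1)]$, the elements $\rho_t=[\gamma_{g_t}^*|_{H^0(M,L^k)}]\in PGL(N_k+1,\mathbb{C})$ act on the $H^0(\mathbb{C}P^n,\mathcal{O}(dk))$ factor through $\mathrm{Sym}^{dk}$, hence have eigenvalues $\{e^{jt}:0\le j\le dk\}$; since $dk\geq 1$ these are not all equal, so the generator of $\Gamma=\{\rho_t:t\in\mathbb{R}\}$ is a nonzero real semisimple element of $\mathfrak{pgl}(N_k+1,\mathbb{C})$, the eigenvalue ratios of $\rho_t$ tend to $\infty$, and $\Gamma$ is a closed noncompact subgroup of $PGL(N_k+1,\mathbb{C})$ isomorphic to $\mathbb{R}$. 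Because $PU(N_k+1)$ is compact, the right-translation action of any closed subgroup of $PGL(N_k+1,\mathbb{C})$ on $X=PU(N_k+1)\backslash PGL(N_k+1,\mathbb{C})$ is proper; hence the flow $t\mapsto\gamma_{g_t}^*$ on $\mathcal{B}_{M,L^k}$ is a \emph{proper} action of $\mathbb{R}$ by isometries of both metrics.

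I would then conclude by a disjoint-translates argument. The Calabi and Mabuchi metrics are smooth and positive definite on the finite-dimensional manifold $\mathcal{B}_{M,L^k}$ (positive definiteness because $G_{Ca}(\dot\omega,\dot\omega)=\int_M(\Delta_\omega\dot\varphi)^2\,dV_\omega>0$ and $G_{Ma}(\dot\omega,\dot\omega)=\int_M(\dot\varphi-\dashint\dot\varphi)^2\,dV_\omega>0$ for $\dot\omega\neq0$), so any precompact open $E\subset\mathcal{B}_{M,L^k}$ has $0<\mu_{Ca}(E)<\infty$ and $0<\mu_{Ma}(E)<\infty$. Properness yields $T_0<\infty$ with $\gamma_{g_t}^*(\overline E)\cap\overline E=\varnothing$ for $|t|>T_0$; putting $t_j=(2T_0+1)j$ for $j\in\mathbb{N}$, the sets $\gamma_{g_{t_j}}^*(E)$ are pairwise disjoint, and each has the same $\mu_{Ca}$-measure (resp. $\mu_{Ma}$-measure) as $E$, since $\gamma_{g_{t_j}}^*$ is an isometry. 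Therefore $\mu_{Ca}(\mathcal{B}_{M,L^k})\geq\sum_{j\in\mathbb{N}}\mu_{Ca}(\gamma_{g_{t_j}}^*E)=+\infty$, and identically $\mu_{Ma}(\mathcal{B}_{M,L^k})=+\infty$.

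The step requiring the most care is the properness in the second paragraph, which rests on two facts: (a) that $\mathcal{B}_{M,L^k}$ really is the homogeneous space $PU(N_k+1)\backslash PGL(N_k+1,\mathbb{C})$ with $\mathrm{Aut}$ acting by translations, and (b) that the one-parameter subgroup induced by $\mathrm{Aut}(\mathbb{C}P^n)$ is \emph{unbounded} in $PGL(N_k+1,\mathbb{C})$. I expect (b) to be the genuine crux: a priori one might worry that the representation $\mathrm{Sym}^{dk}$ of $g_t$ stays in a compact family, but the eigenvalue computation above rules this out. Note also why the argument is routed through properness of a group action rather than through a distance estimate: the Calabi metric on all of $\mathcal{K}_{[\omega_0]}$ has \emph{finite} diameter (it is isometric to an open subset of an infinite-dimensional round sphere), so one cannot hope to prove the theorem by showing that the degenerating path in $\mathcal{B}_{M,L^k}$ runs off to infinity in the induced length metric; the disjoint-translates mechanism sidesteps this entirely.
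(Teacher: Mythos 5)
Your argument is correct, and while it shares the paper's overall skeleton --- decompose $L$ as $\pi_1^* L_1 \otimes \pi_2^* \mathcal{O}(d)$ (the paper's Lemma \ref{lemmadecomposition}), produce a non-compact family of isometries of $\mathcal{B}_{M,L^k}$ from $\Aut(\mathbb{C}P^n)$ via Proposition \ref{propactioninv}, and sum the measures of disjoint isometric translates of a set of positive finite measure --- the mechanism producing the disjoint translates is genuinely different. The paper (Lemma \ref{counterequivariantlmm} and Proposition \ref{propCPNexample}) works with the discrete unipotent group $\Gamma=\{I_{n+1}+mE_{n-1,n}\}_{m\in\mathbb{Z}}$ and verifies by hand, in the upper-triangular coordinates of Section \ref{matrix}, that the orbit $\{RA_m\}_{m\in\mathbb{Z}}$ is discrete in $SL(N_k+1,\mathbb{C})$ by tracking the single entry $r'_{N_k-1,N_k,m}=r_{N_k-1,N_k}-m\,r_{N_k-1,N_k-1}$; it then passes from $\mathbb{C}P^n$ to the product by restricting to a fiber $\{x\}\times\mathbb{C}P^n$ (Theorem \ref{thmexampleCPNproductL}). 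You instead take a diagonal one-parameter subgroup, read off its action on $H^0(M,L^k)$ through the K\"unneth decomposition, show unboundedness in $PGL(N_k+1,\mathbb{C})$ by the eigenvalue-ratio computation, and invoke the general fact that a closed subgroup acts properly by right translation on $K\backslash G$ for $K$ compact. Your route is more conceptual and makes transparent why the conclusion holds for any polarized manifold with non-compact $\Aut(M,L)$ (the paper's remark after Theorem \ref{thmexampleCPN}); the paper's route is more elementary, requiring no properness theory, only the explicit QR coordinate. Two points you should spell out in a full write-up: that the identification of $\gamma^*$ with \emph{right} translation on $SU(N_k+1)\backslash SL(N_k+1,\mathbb{C})$ is exactly the computation $\gamma^*\omega=\Psi(\Phi(RA))$ carried out in Proposition \ref{propCPNexample}, and that the image of $t\mapsto g_t$ is closed in $PGL(N_k+1,\mathbb{C})$ (an injective proper continuous homomorphism from $\mathbb{R}$ has closed image, which your eigenvalue estimate does supply).
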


\begin{rmk}
    Actually, Theorem \ref{thmexampleCPN} holds for any polarized manifold $(M,L)$ such that the automorphism group $\Aut (M,L)$ is non-compact.
\end{rmk}

Our Theorem \ref{thmexampleCPN} is inspired by the following conjecture of Shiffman-Zelditch in \cite{shifzel1}.

 \begin{conj}[{\cite[Conjecture 5.1]{shifzel1}}]
\label{shifzelconj}
 The Calabi volume $\mu_{Ca} ( \mathcal{B}_{M, L^k} ) $ is finite for each $k$.
\end{conj}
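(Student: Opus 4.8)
The plan is to prove finiteness by transporting the integral to the noncompact symmetric space that parameterizes Bergman metrics and bounding the resulting density. Following \cite[Section 5]{shifzel1}, fix the reference inner product on $H^0(M,L^k)$ induced by $\omega_0$; the Bergman metric attached to a basis change $g\in GL(N_k+1,\mathbb{C})$ depends only on the positive Hermitian form $g^*g$, and only up to an overall positive scale, so $\mathcal{B}_{M,L^k}$ is identified (through the Bergman map, possibly after a quotient) with $X_N := SL(N_k+1,\mathbb{C})/SU(N_k+1)$, the space of determinant-one positive Hermitian forms, of real dimension $d=(N_k+1)^2-1$. Writing $G_{X_N}$ for the symmetric metric and $\Phi:=\sqrt{\det(G_{X_N}^{-1}G_{Ca})}$ for the density of the Calabi volume against the symmetric volume, and introducing geodesic polar coordinates $(r,\theta)$ centered at the reference point with $dV_{X_N}=J(r,\theta)\,dr\,d\sigma(\theta)$, I reduce the conjecture to the estimate
\[
\mu_{Ca}(\mathcal{B}_{M,L^k})=\int_{S^{d-1}}\!\int_0^\infty \Phi(r,\theta)\,J(r,\theta)\,dr\,d\sigma(\theta)<\infty,
\]
in which $J(r,\theta)\asymp e^{2\rho(\theta)r}$ already grows exponentially because $X_N$ is of noncompact type; finiteness therefore demands that $\Phi$ decay strictly faster than $e^{-2\rho(\theta)r}$.

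Second, I would estimate $\Phi$ along a geodesic ray $t\mapsto\exp(tA)\cdot o$, with $A$ a traceless Hermitian endomorphism of $H^0(M,L^k)$ having eigenvalues $\lambda_1\ge\cdots\ge\lambda_{N_k+1}$. Along the ray the sections are reweighted by $e^{t\lambda_i}$, so the metric $\omega_t$ concentrates onto the locus cut out by the heavily weighted sections, and for the near-field $r\lesssim1$ the Tian--Yau--Zelditch expansion of \cite{sz1,cat1} and the peak-section estimates of \cite{tg1} show $\Phi$ is comparable to a $k$-dependent constant. For the far field I would compute the tangent vector $\dot\omega_t$ and its Calabi norm $\int_M\bigl(\mathrm{tr}_{\omega_t}\sqrt{-1}\partial\partialbar\dot\varphi_t\bigr)^2\,dV_{\omega_t}$ directly from the weighted Bergman kernel, aiming to show that the concentration of mass forces an exponential bound $\Phi(r,\theta)\lesssim e^{-c(\theta)r}$ with a positive, measurable rate $c(\theta)>2\rho(\theta)$ for almost every direction $\theta$; an accompanying lower bound $\inf_\theta\bigl(c(\theta)-2\rho(\theta)\bigr)>0$ over the compact sphere would then make the inner integral uniformly convergent and close the argument.

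The hard part will be exactly this uniform positivity of the decay rate, and the obstruction is structural rather than computational. By Proposition \ref{propactioninv}, any one-parameter subgroup $\exp(tA)$ arising from $\Aut(M,L)$ acts on $\mathcal{B}_{M,L^k}$ by Calabi isometries; since such a subgroup is also a transvection of $X_N$, the density $\Phi$ is \emph{constant} along the corresponding ray, so $c(\theta)=0$ there and the inner integral $\int_0^\infty\Phi\,J\,dr\asymp\int_0^\infty e^{2\rho(\theta)r}\,dr$ diverges. When $\Aut(M,L)$ is compact the only such $A$ is trivial, no flat directions occur, and the desired uniform decay is plausible, so the strategy would deliver finiteness; but when $\Aut(M,L)$ is noncompact --- as for $M=M_1\times\mathbb{C}P^n$, whose factor $\mathbb{C}P^n$ contributes the noncompact group $\Aut(\mathbb{C}P^n)=PGL(n+1,\mathbb{C})$ --- there is an open cone of directions $\theta$ with $c(\theta)=0$, the inner integral diverges along each of them, and the approach cannot close. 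Pinning down this dichotomy, and in particular deciding whether any \emph{nonautomorphic} direction can also fail to decay, is the crux; it is precisely at the far-field decay estimate that the conjecture is seen to stand or fall with the compactness of $\Aut(M,L)$.
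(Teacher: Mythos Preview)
The statement you are attempting to prove is Conjecture~\ref{shifzelconj}, which the paper does \emph{not} prove; on the contrary, Theorem~\ref{thmexampleCPN} shows it is false whenever $M=M_1\times\mathbb{C}P^n$ (and, per the remark following it, whenever $\Aut(M,L)$ is noncompact). So your proposal, insofar as it aims to establish finiteness in general, is pursuing a false statement and cannot close. You yourself locate the obstruction in your third paragraph: along one-parameter subgroups coming from $\Aut(M,L)$ the density $\Phi$ is constant by Proposition~\ref{propactioninv}, while the symmetric-space Jacobian already blows up. That observation is correct and is essentially the idea behind the paper's counterexample; your speculation that finiteness might survive when $\Aut(M,L)$ is compact is precisely what the paper records as the open Problem~\ref{shifzelpbm}, not something either you or the paper proves.

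Where your sketch and the paper's actual argument diverge is in how the divergence is made rigorous. Your heuristic (an open cone of directions with $c(\theta)=0$, forcing $\int_0^\infty e^{2\rho(\theta)r}\,dr=\infty$) rests on polar-coordinate asymptotics you never establish, and it is not clear that a single divergent radial integral survives the angular integration to give infinite total volume. The paper's route is more elementary and avoids decay analysis altogether: it exhibits a \emph{discrete} subgroup $\Gamma\cong\mathbb{Z}$ of $\Aut(M)$ (a unipotent one-parameter subgroup of $PGL(n+1,\mathbb{C})$ restricted to integer parameter), checks by an explicit upper-triangular matrix computation in the coordinates of Section~\ref{matrix} that the $\Gamma$-orbit of a fixed $\omega\in\mathcal{B}_{M,L^k}$ is infinite and discrete, and then applies Lemma~\ref{counterequivariantlmm}: a small metric ball around $\omega$ has positive measure, its $\Gamma$-translates are pairwise disjoint and of equal measure by the isometry in Proposition~\ref{propactioninv}, so the total volume is infinite. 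No estimate of $\Phi$ or $J$ is needed.
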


As highlighted by Shiffman-Zelditch in \cite{shifzel1}, if this conjecture is true, one can obtain a rigorous definition of the Polyakov path integral over metrics, which used the Calabi metric to define its volume form. Such a development would undoubtedly have an impact on both K\"ahler geometry and mathematical physics, fostering new connections between the two fields. 

Since Theorem \ref{thmexampleCPN} shows that Shiffman-Zelditch's original conjecture may not always hold, we pose the following problem as a modification of Shiffman-Zelditch's original conjecture.

\begin{pbm}[Modified Shiffman-Zelditch's conjecture]
\label{shifzelpbm}
Let $(M,L)$ be a polarized manifold, and $\omega_0 \in c_1 (L) $ be a K\"ahler metric on $M$. Assume that $\Aut (M,L) $ is compact.
\begin{enumerate}[(a).]
    \item Is there an integer $k_0 \in\mathbb{N} $ such that for any $k\geq k_0$, the Calabi volume $\mu_{Ca} ( \mathcal{B}_{M, L^k} ) $ and the Mabuchi volume $\mu_{Ma} ( \mathcal{B}_{M, L^k} ) $ are finite?
    \item Is it possible to obtain a rigorous definition of the Polyakov path integral over metrics by using the Calabi metric or the Mabuchi metric to define its volume form?
\end{enumerate}
\end{pbm}

This paper is organized as follows. In Section \ref{matrix}, we recall the parameterization of $\mathcal{B}_{L,k}$ and use it to describe the Calabi volume form on $\mathcal{B}_{L,k} $. Then we will prove Proposition \ref{propactioninv} and Theorem \ref{thmexampleCPN} in Section \ref{proofmainthm}.

\textbf{Acknowledgement.} The author wants to express his deep gratitude to Professor Gang Tian for constant encouragement.

\section{The parameterization of \texorpdfstring{$\mathcal{B}_{M,L^k}$}{Lg}}
\label{matrix}

In this section, we recall the parameterization of $\mathcal{B}_{M,L^k}$. The following parameterization is known (for example, see \cite[Section 5]{shifzel1}), but we include the details for convenience.

Let $\{ S_i \}_{i=0}^{N_k} $ be a basis of $ H^0 (M,L^k) $, and let $h $ be a Hermitian metric on $L$ such that the Ricci curvature $\omega_h = \Ric (h) >0 $. For any matrix $A = (a_{i,j})_{0\leq i,j \leq N_k} \in GL(N_k +1 ,\mathbb{C} ) $, one can construct a K\"ahler metric
\begin{eqnarray}
    \omega_A = \frac{1}{k} F_A^* \omega_{FS} = \frac{1}{2\pi } \omega_h + \frac{\sqrt{-1}}{2k\pi } \partial \partialbar \log \left( \sum_{i=0}^{N_k} \left\Vert \sum_{j=0}^{N_k} a_{i,j} S_j \right\Vert_{h^k}^2 \right) \in \mathcal{B}_{M,L^k} ,
\end{eqnarray}
where $\omega_{FS}$ is the Fubini-Study metric on $ \mathbb{C}P^{N_k}$, and $F_A$ is the embedding $M\to \mathbb{C}P^{N_k}$ given by the basis $\big\{  \sum_{j=0}^{N_k} a_{i,j} S_j \big\}_{i=0}^{N_k}$. It is easy to see that $\omega_A$ is independent of the choice of $h$, and a K\"ahler metric $\omega \in \mathcal{B}_{M,L^k} $ if and only if there exists a matrix $A = (a_{i,j})_{0\leq i,j \leq N_k} \in GL(N_k +1 ,\mathbb{C} ) $ such that $\omega = \omega_A $.

Let $B = (b_{i,j})_{0\leq i,j \leq N_k} \in GL(N_k +1 ,\mathbb{C} ) $. Assume that $\omega_A =\omega_B $. Then 
$$ \frac{\sqrt{-1}}{2\pi } \partial \partialbar \log \left( \sum_{i=0}^{N_k} \left\Vert \sum_{j=0}^{N_k} a_{i,j} S_j \right\Vert_{h^k}^2 \right) = \frac{\sqrt{-1}}{2\pi } \partial \partialbar \log \left( \sum_{i=0}^{N_k} \left\Vert \sum_{j=0}^{N_k} b_{i,j} S_j \right\Vert_{h^k}^2 \right) ,$$
and hence there exists a constant $\rho >0$ such that
\begin{eqnarray} \label{bergmanequalrho}
    \sum_{i=0}^{N_k} \left\Vert \sum_{j=0}^{N_k} a_{i,j} S_j \right\Vert_{h^k}^2 = \rho^2 \sum_{i=0}^{N_k} \left\Vert \sum_{j=0}^{N_k} b_{i,j} S_j \right\Vert_{h^k}^2 .
\end{eqnarray}
Let $U_x$ be an open neighborhood of $x\in M$, and let $e_x$ be a local frame of $L$ on $U_x$. Without loss of generality, we can assume that there exists a biholomorphic map $ F_x : U_x \to B_1 (0) \subset \mathbb{C}^n $. Write $S_i = f_i e_x^k $. Define 
\begin{eqnarray} \label{preAAtBBtequation}
    \varrho (z,w) = \sum_{i,j,k=0}^{N_k} (a_{i,j} \bar{a}_{i,k} - \rho^2 b_{i,j} \bar{b}_{i,k} ) f_j (F_x^{-1}(z)) \bar{ f}_k (F_x^{-1}(\bar{w} )) \in \mathcal{O} ( B_1 (0) ).
\end{eqnarray}
Then (\ref{bergmanequalrho}) implies that $\varrho (z,\bar{z}) = 0$, $\forall z\in B_1 (0) $. Hence $\varrho \in\mathcal{O} ( B_1 (0) ) $ implies that $\varrho =0$. Since $\{ S_i \}_{i=0}^{N_k} $ is a basis, it can be observed that $\{ f_i \}_{i=0}^{N_k} $ is linearly independent. Now we can obtain from (\ref{preAAtBBtequation}) that $A^t \bar{A} = \rho^2 B^t \bar{B} $, and hence $Q=\rho^{-1}AB^{-1} \in U (N_k +1) $, where $A^t$ is the transpose of $A$, and $\bar{A}$ is the conjugate of $A$. 

If $A=\rho Q B$ for some $\rho >0$ and $Q\in U (N_k +1) $, then one can easily to see that $\omega_A =\omega_B$. It follows that the corresponding $A\mapsto \omega_A $ gives a diffeomorphism $(\mathbb{C}^* \times SU (N_k +1) )\backslash GL (N_k +1 ,\mathbb{C}) \to \mathcal{B}_{M,L^k} $. Thus, $\mathcal{B}_{M,L^k}$ can be parametrized by $(\mathbb{C}^* \times SU (N_k +1) )\backslash GL (N_k +1 ,\mathbb{C}) = SU (N_k +1) \backslash SL (N_k +1 ,\mathbb{C}) $.

\begin{prop}[{\cite[Section 5]{shifzel1}}] \label{propSUSLmapBMLK}
    The map
    \begin{eqnarray} \label{formulaSUSLmapBMLK}
 \Psi : \left\{ \begin{array}{ll}
SU (N_k +1) \backslash SL (N_k +1 ,\mathbb{C}) & \to \mathcal{B}_{M,L^k} ,\\
\;\;\;\;\;\;\;\;\;\;\;\;\;\;\;\;\;\;\;\; [A] & \mapsto \omega_A
\end{array} \right.
\end{eqnarray}
is well-defined, where $A\in SL (N_k +1 ,\mathbb{C}) $ is a representative of $[A]\in SU (N_k +1) \backslash SL (N_k +1 ,\mathbb{C}) $. Moreover, $\Psi$ is a diffeomorphism.
\end{prop}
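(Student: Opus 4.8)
The plan is to deduce both assertions from the computation already made in this section; the one genuinely new ingredient is an analysis of the differential of $A \mapsto \omega_A$. First I would record what the preceding paragraphs establish: for $A,B \in GL(N_k+1,\mathbb{C})$ one has $\omega_A = \omega_B$ precisely when $A = \rho Q B$ with $\rho>0$ and $Q \in U(N_k+1)$. Since $\mathbb{R}_{>0}\cdot U(N_k+1) = \mathbb{C}^*\cdot SU(N_k+1)$ (write $\lambda = \rho e^{i\alpha}$ and split off an $(N_k+1)$-th root of $e^{i\alpha}$), the fibres of $A \mapsto \omega_A$ are exactly the left cosets of $\mathbb{C}^*\cdot SU(N_k+1)$ in $GL(N_k+1,\mathbb{C})$, so $A\mapsto\omega_A$ descends to an injective map on $(\mathbb{C}^*\cdot SU(N_k+1))\backslash GL(N_k+1,\mathbb{C})$. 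Rescaling by a root of the determinant shows every class has a representative of determinant $1$, and two such representatives differ by an element of $SU(N_k+1)$; this gives the identification of the domain with $SU(N_k+1)\backslash SL(N_k+1,\mathbb{C})$, hence $\Psi$ is well defined and injective. Surjectivity is immediate from the characterization, recalled above, that every $\omega \in \mathcal{B}_{M,L^k}$ is of the form $\omega_A$.

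Next I would put a smooth structure on the picture via the lift $\widetilde\Psi : SL(N_k+1,\mathbb{C}) \to \mathcal{K}_{[\omega_0]}$, $A\mapsto\omega_A$. This map is real-analytic: $A\mapsto \log\big(\sum_i \|\sum_j a_{ij}S_j\|_{h^k}^2\big)$ is a real-analytic map into $C^\infty(M,\mathbb{R})$, to which one applies the bounded linear operator $\frac{\sqrt{-1}}{2k\pi}\partial\partialbar$; since $\widetilde\Psi$ is invariant under left multiplication by $SU(N_k+1)$, it descends to the smooth map $\Psi$. The core step is that $\Psi$ is an immersion. Writing $\phi_B = \sum_i\|\sum_j b_{ij}S_j\|_{h^k}^2$, for $X \in \mathfrak{sl}(N_k+1,\mathbb{C})$ one computes
\[
  d\widetilde\Psi_A(XA) \;=\; \frac{\sqrt{-1}}{2k\pi}\,\partial\partialbar\Big(\phi_A^{-1}\,\tfrac{d}{dt}\big|_{t=0}\phi_{\exp(tX)A}\Big).
\]
If this form vanishes, the bracketed function is pluriharmonic, hence a real constant $c$ on the compact manifold $M$, so $\tfrac{d}{dt}|_{t=0}\phi_{\exp(tX)A} = c\,\phi_A$ pointwise. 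Expanding both sides in a local frame and running the same analyticity and linear-independence argument that was used to pass from \eqref{preAAtBBtequation} to $A^t\bar A = \rho^2 B^t\bar B$, one gets $A^t(X^t+\bar X)\bar A = c\,A^t\bar A$, i.e.\ $X^t+\bar X = cI$; taking traces and using $\mathrm{tr}\,X = 0$ forces $c=0$, so $X$ is skew-Hermitian and therefore lies in $\mathfrak{su}(N_k+1)$. Hence $\ker d\widetilde\Psi_A = \mathfrak{su}(N_k+1)\cdot A$ is exactly the tangent space to the $SU(N_k+1)$-orbit through $A$, and $\Psi$ is an injective immersion.

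It then remains to upgrade ``injective immersion'' to ``diffeomorphism onto $\mathcal{B}_{M,L^k}$'', for which I would check that $\Psi^{-1}$ is continuous for the topology that $\mathcal{B}_{M,L^k}$ inherits from $\mathcal{K}_{[\omega_0]}$: from $\omega_A$ and the fixed metric $h$ one recovers $\log\phi_A$ up to an additive constant (a real function on a compact Kähler manifold is determined by its $\partial\partialbar$ up to a constant), hence $\phi_A$ up to a positive factor, hence --- by the polarization argument again --- the positive Hermitian form $A^t\bar A$ up to a positive factor, hence the class $[A]\in SU(N_k+1)\backslash SL(N_k+1,\mathbb{C})$; each of these assignments is continuous in the relevant $C^\infty$ topology. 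I expect this last point to be the main obstacle, or at least the only place demanding care: bookkeeping the ``up to a constant'' normalizations so that the reconstruction $\omega_A \mapsto [A]$ is genuinely well defined and continuous (equivalently, checking that $\omega_{A_i}\to\omega_A$ in $C^\infty$ forces $[A_i]\to[A]$). Everything else is either already in the text or a direct transcription of the uniqueness computation carried out above.
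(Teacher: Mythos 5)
Your proposal is correct, and its first half coincides with what the paper actually does: the uniqueness computation ($\omega_A=\omega_B$ iff $A=\rho QB$ with $\rho>0$, $Q\in U(N_k+1)$, obtained from the polarization/linear-independence argument giving $A^t\bar A=\rho^2B^t\bar B$) together with the identification $\mathbb{R}_{>0}\cdot U(N_k+1)=\mathbb{C}^*\cdot SU(N_k+1)$ is precisely how the text establishes that $\Psi$ is a well-defined bijection. The paper stops there: the smooth/diffeomorphism part of Proposition \ref{propSUSLmapBMLK} is simply asserted, with the statement attributed to Shiffman--Zelditch. Your second and third paragraphs therefore supply content the paper omits, and they supply it correctly: the kernel computation $X^t+\bar X=cI$, $\operatorname{tr}X=0\Rightarrow c=0\Rightarrow X\in\mathfrak{su}(N_k+1)$ identifies $\ker d\widetilde\Psi_A$ with the tangent space to the $SU(N_k+1)$-orbit (reusing the same polarization trick as in the injectivity step, which is the right economy), and the reconstruction $\omega_A\mapsto\log\phi_A\mapsto A^t\bar A\mapsto[A]$, with the multiplicative constant pinned down by $\det(A^t\bar A)=1$, gives continuity of the inverse. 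The one thing worth making explicit if you wrote this up is what smooth structure $\mathcal{B}_{M,L^k}\subset\mathcal{K}_{[\omega_0]}$ is being given --- your embedding argument is exactly what justifies treating it as a finite-dimensional submanifold with $\Psi$ a diffeomorphism onto it, whereas the paper implicitly takes the parametrization itself as the definition of that structure. In short: same skeleton as the paper for the bijection, plus a genuine (and sound) completion of the differential-geometric half that the paper leaves to the citation.
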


Now we can parameterize $\mathcal{B}_{M,L^k}$ by $SU (N_k +1) \backslash SL (N_k +1 ,\mathbb{C})$. However, calculating the integral over $SU (N_k +1) \backslash SL (N_k +1 ,\mathbb{C})$ is a bit complicated. Our next goal is to construct a global coordinate of $SU (N_k +1) \backslash SL (N_k +1 ,\mathbb{C})$.

Let $A\in SL (N_k +1 ,\mathbb{C}) $. By the Gram-Schmidt process, one can find $Q\in SU (N_k +1) $ such that $R= (r_{i,j})_{0\leq i,j \leq N_k} =Q^{-1} A $ is an upper triangular matrix such that $r_{i,i} \in (0,\infty ) $ and $\prod_{i=0}^{N_k } r_{i,i} = 1 $. This decomposition is the complex version of the famous QR decomposition \cite[Theorem 2.1.14]{hojo1}. Now we assume that $\tilde{Q}\in SU (N_k +1 )$ and $\tilde{R}= (\tilde{r}_{i,j})_{0\leq i,j \leq N_k} $ is an upper triangular matrix such that $\tilde{r}_{i,i} \in (0,\infty ) $, $\prod_{i=0}^{N_k } \tilde{r}_{i,i} = 1 $ and $\tilde{Q}\tilde{R} = A$. Then we can conclude that $Q^{-1} \tilde{Q} = R\tilde{R}^{-1} \in SU(N_k +1) $ is an upper triangular matrix such that the diagonal elements are positive numbers. It follows that $ Q^{-1} \tilde{Q} = R\tilde{R}^{-1} = I_{N_k +1} $. Hence for each $A\in SL (N_k +1 ,\mathbb{C}) $, the pair $(Q,R)$ is uniqueness.

Let $\mathcal{T}_{N_k} $ be the following submanifold of $SL (N_k +1 ,\mathbb{C})$:
\begin{eqnarray}
    \mathcal{T}_{N_k} = \left\lbrace R \in SL (N_k +1 ,\mathbb{C}) : \textrm{ $r_{i,i } 
\in \mathbb{R}_+ $, $\prod_{i=0}^{N_k} r_{i,i} =1 $, and $r_{i,j} =0$, $\forall i>j$ } \right\rbrace .
\end{eqnarray}
By the QR decomposition as above, one can see that $\mathcal{T}_{N_k}$ is a complete set of representatives of $ SU (N_k +1) \backslash SL (N_k +1 ,\mathbb{C}) $, and the inclusion map $\mathcal{T}_{N_k} \to SL (N_k +1 ,\mathbb{C}) $ gives a diffeomorphism:
\begin{eqnarray} \label{finalparametrimap}
    \Phi : \left\{ \begin{array}{ll}
\;\;\;\;\;\;\;\;\;\;\;\;\; \mathcal{T}_{N_k}  & \to SU (N_k +1) \backslash SL (N_k +1 ,\mathbb{C}) \stackrel{\Psi}{\cong} \mathcal{B}_{L,k} ,\\
 R = (r_{i,j})_{0\leq i,j \leq N_k} & \mapsto \;\;\;\;\;\;\;\;\;\;\;\;\;\;\;\;\;\; [R] \;\;\;\;\;\;\;\;\;\;\;\;\;\;\;\;\;\;\;\; \leftrightarrow \omega_R .
\end{array} \right.
\end{eqnarray}

Moreover, the elements $r_{i,j}$ give a diffeomorphism 
\begin{eqnarray} \label{finalparametrieuclidean}
    \Upsilon : \mathbb{R}_{+}^{N_k} \times \mathbb{C}^{\frac{N_k (N_k +1) }{2}} \to SU (N_k +1) \backslash SL (N_k +1 ,\mathbb{C}) \stackrel{\Psi}{\cong} \mathcal{B}_{L,k} .
\end{eqnarray}
Then for any open subset $\Omega\subset \mathcal{B}_{M,L^k} $, the Calabi volume or Mabuchi volume of $\Omega$ can be expressed as
\begin{eqnarray} \label{calabivolumematrix}
    \mu_{G} ( \Omega ) = \int_{\Upsilon^{-1} ( \Phi^{-1} ( \Psi^{-1} (\Omega ) ) ) } \sqrt{\det (\Upsilon^* \Phi^* \Psi^* G) } ,
\end{eqnarray}
and hence can be calculated on the Euclidean domain $\Upsilon^{-1} ( \Phi^{-1} ( \Psi^{-1} (\Omega ) ) ) \subset \mathbb{R}_{+}^{N_k} \times \mathbb{C}^{\frac{N_k (N_k +1) }{2}} $, where $G$ is the Calabi metric or Mabuchi metric, respectively. 

\section{Proof of Theorem {\ref{thmexampleCPN}}}
\label{proofmainthm}

In this section, we prove Theorem {\ref{thmexampleCPN}}.

At first, we prove ${G}_{Ca, M,L^k}$ and ${G}_{Ma, M,L^k}$ are invariant under the action of $\Aut (M)$.

\begin{prop}[= Proposition \ref{propactioninv}]
Let $ M $ be a $n$-dimensional projective manifold, $L$ be a very ample line bundle on $M$, and $\gamma\in\Aut (M)$. Then the pullback $\gamma^* $ gives a diffeomorphism
\begin{eqnarray}
    \gamma^* : \mathcal{B}_{M,L^k} \to \mathcal{B}_{M,\gamma^* L^k} .
\end{eqnarray}
Moreover, we have $\gamma^* {G}_{Ca, M,L^k} = {G}_{Ca, M,\gamma^* L^k} $ and $\gamma^* {G}_{Ma, M,L^k} = {G}_{Ma, M,\gamma^* L^k} $.
\end{prop}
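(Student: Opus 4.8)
The plan is to recognize the pullback $\gamma^{*}$ as the mere relabelling of Bergman metrics induced by the linear isomorphism $\gamma^{*}\colon H^{0}(M,L^{k})\to H^{0}(M,\gamma^{*}L^{k})$ on holomorphic sections, and then to read off the two isometry statements from the naturality of the Laplacian, the Riemannian volume form and the $\omega$-average under the biholomorphism $\gamma$. There is no deep point here; the work is bookkeeping.

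\emph{The diffeomorphism.} Since $\gamma$ is a biholomorphism, $\gamma^{*}L$ is again very ample, pullback of sections gives a linear isomorphism $\gamma^{*}\colon H^{0}(M,L^{k})\to H^{0}(M,\gamma^{*}L^{k})$, and $\{\gamma^{*}S_{i}\}_{i=0}^{N_{k}}$ is a basis of $H^{0}(M,\gamma^{*}L^{k})$ whenever $\{S_{i}\}_{i=0}^{N_{k}}$ is a basis of $H^{0}(M,L^{k})$, so $N_{k}$ is unchanged. If $F_{A}\colon M\to\mathbb{C}P^{N_{k}}$ is the embedding attached to $\{\sum_{j}a_{i,j}S_{j}\}_{i}$ for $A=(a_{i,j})\in SL(N_{k}+1,\mathbb{C})$, then $F_{A}\circ\gamma$ is precisely the embedding attached to $\{\sum_{j}a_{i,j}\gamma^{*}S_{j}\}_{i}$, whence
\[
\gamma^{*}\omega_{A}=\frac{1}{k}\,(F_{A}\circ\gamma)^{*}\omega_{FS}\in\mathcal{B}_{M,\gamma^{*}L^{k}},
\]
i.e. the metric built from the \emph{same} matrix $A$ with respect to the pulled-back basis. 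Hence, writing $\Psi_{L}$ for the parameterization of Proposition~\ref{propSUSLmapBMLK} built from $\{S_{i}\}$ and $\Psi_{\gamma^{*}L}$ for the one built from $\{\gamma^{*}S_{i}\}$, we get $\gamma^{*}\circ\Psi_{L}=\Psi_{\gamma^{*}L}$ on $SU(N_{k}+1)\backslash SL(N_{k}+1,\mathbb{C})$, so $\gamma^{*}=\Psi_{\gamma^{*}L}\circ\Psi_{L}^{-1}$ is a diffeomorphism $\mathcal{B}_{M,L^{k}}\to\mathcal{B}_{M,\gamma^{*}L^{k}}$ with inverse $(\gamma^{-1})^{*}$.

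\emph{The differential and the isometry.} A tangent vector at $\omega\in\mathcal{B}_{M,L^{k}}$ is represented by $\dot\varphi\in C^{\infty}(M,\mathbb{R})/\mathbb{R}$, the variation of a K\"ahler potential along a curve $\omega_{t}$ with $\omega_{0}=\omega$, $\dot\omega_{0}=\sqrt{-1}\,\partial\partialbar\dot\varphi$. Because $\gamma$ is holomorphic, $\frac{d}{dt}\big|_{0}\gamma^{*}\omega_{t}=\gamma^{*}(\sqrt{-1}\,\partial\partialbar\dot\varphi)=\sqrt{-1}\,\partial\partialbar(\gamma^{*}\dot\varphi)$, and $\gamma^{*}$ commutes with addition of constants, so $d(\gamma^{*})_{\omega}$ is the map $\dot\varphi\mapsto\gamma^{*}\dot\varphi$ on $C^{\infty}(M,\mathbb{R})/\mathbb{R}$. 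Now both the Calabi pairing (\ref{definitioncalabimetric}) and the Mabuchi pairing (\ref{definitionmabuchimetric2}) are built intrinsically from $(M,\omega)$ out of $\Delta_{\omega}$, $dV_{\omega}$ and the $\omega$-average. Since $\gamma\colon(M,\gamma^{*}\omega)\to(M,\omega)$ is an isometry of K\"ahler manifolds, one has $\Delta_{\gamma^{*}\omega}(\gamma^{*}u)=\gamma^{*}(\Delta_{\omega}u)$, $dV_{\gamma^{*}\omega}=\gamma^{*}dV_{\omega}$, $\Vol_{\gamma^{*}\omega}(M)=\Vol_{\omega}(M)$, hence $\dashint_{M}\gamma^{*}u\,dV_{\gamma^{*}\omega}=\dashint_{M}u\,dV_{\omega}$; moreover $\gamma$, being a biholomorphism of $M$, preserves orientation, so $\int_{M}\gamma^{*}\eta=\int_{M}\eta$ for any top form $\eta$. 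Substituting these into the definitions gives, for $\dot\varphi,\dot\phi$ at $\omega$,
\[
G_{Ca,M,\gamma^{*}L^{k}}\big(d(\gamma^{*})\dot\varphi,\,d(\gamma^{*})\dot\phi\big)=\int_{M}\gamma^{*}\!\big[(\Delta_{\omega}\dot\varphi)(\Delta_{\omega}\dot\phi)\,dV_{\omega}\big]=G_{Ca,M,L^{k}}(\dot\varphi,\dot\phi),
\]
and likewise $G_{Ma,M,\gamma^{*}L^{k}}\big(d(\gamma^{*})\dot\varphi,d(\gamma^{*})\dot\phi\big)=G_{Ma,M,L^{k}}(\dot\varphi,\dot\phi)$, which is exactly the assertion $\gamma^{*}G_{Ca,M,L^{k}}=G_{Ca,M,\gamma^{*}L^{k}}$ and $\gamma^{*}G_{Ma,M,L^{k}}=G_{Ma,M,\gamma^{*}L^{k}}$.

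\emph{Main obstacle.} The proposition is pure naturality, so there is no genuine analytic difficulty. The only points that need care are in the first step — checking that $\gamma^{*}$ really lands in, and surjects onto, the Bergman space of the \emph{other} line bundle $\gamma^{*}L^{k}$, and that this is compatible with the parameterization of Section~\ref{matrix} — and the verification in the second step that, under the identification $T_{\omega}\mathcal{B}\cong C^{\infty}(M,\mathbb{R})/\mathbb{R}$, the differential of $\gamma^{*}$ is honestly the pullback of potentials, with no spurious $\omega$-dependent conformal factor.
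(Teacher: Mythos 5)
Your proposal is correct and follows essentially the same route as the paper: identify $\gamma^*\omega$ with the metric induced by the composed embedding $F_\omega\circ\gamma$ to see it lands in $\mathcal{B}_{M,\gamma^*L^k}$, compute the differential as $\dot\varphi\mapsto\dot\varphi\circ\gamma$, and conclude by naturality of $\Delta_\omega$, $dV_\omega$ and the average under pullback. Your smoothness argument via the parameterization $\Psi$ is slightly more explicit than the paper's (which simply asserts smoothness), but the substance is the same.
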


\begin{proof}
Let $\omega\in \mathcal{B}_{M,L^k}$. Then we have an embedding $F_\omega : M\to\mathbb{C}P^{N_k} $ such that $F_\omega^* \mathcal{O} (1) = L^k $ and $F_\omega^* \omega_{FS} = k\omega $, where $\omega_{FS}$ is the Fubini-Study metric on $\mathbb{C}P^{N_k} $. Hence $k \gamma^* \omega = \gamma^* F_\omega^* \omega_{FS} = (F_\omega \circ \gamma )^* \omega_{FS} $ and $ (F_\omega \circ \gamma )^* \mathcal{O}(1) = \gamma^* L^k $. It follows that $\gamma^* \omega \in \mathcal{B}_{M,\gamma^* L^k} $. Clearly, the map $\gamma^* : \mathcal{B}_{M,L^k} \to \mathcal{B}_{M,\gamma^* L^k} $ is smooth. Similarly, $(\gamma^{-1} )^* $ is also smooth, and it is the inverse map of $\gamma^*$. Now we can conclude that $\gamma^* : \mathcal{B}_{M,L^k} \to \mathcal{B}_{M,\gamma^* L^k} $ is a diffeomorphism.

By definition, for any $ \dot{\varphi} ,\dot{\phi} \in T_\omega \mathcal{B}_{M,L^k} \leq T_\omega \mathcal{K}_{[\omega_0]} \cong C^\infty (M,\mathbb{R}) /\mathbb{R} $, we have $d\gamma^*_\omega \dot{\varphi} = \dot{\varphi} \circ \gamma $. Hence 
\begin{eqnarray*}
    \left(\gamma^* {G}_{Ca, M,L^k} \right)_{\gamma^* \omega} ( d\gamma^*_\omega \dot{\varphi} ,d\gamma^*_\omega \dot{\varphi} ) & = & \left( {G}_{Ca, M,L^k} \right)_{\omega } ( \dot{\varphi} , \dot{\phi} ) \\
    & = & \int_{M} \Delta_\omega \dot{\varphi} \Delta_\omega \dot{\phi} dV_\omega \\
    & = & \int_{M} \left( \Delta_\omega \dot{\varphi} \Delta_\omega \dot{\phi} \right) \circ \gamma dV_{\gamma^*\omega} \\
    & = & \int_{M} \left( \Delta_{\gamma^*\omega} ( \dot{\varphi} \circ \gamma ) \right) \cdot \left( \Delta_{\gamma^*\omega} ( \dot{\phi} \circ \gamma ) \right)  dV_{\gamma^*\omega} \\
    & = & \left( {G}_{Ca, M,\gamma^* L^k} \right)_{\gamma^* \omega } ( d\gamma^*_\omega \dot{\varphi} ,d\gamma^*_\omega \dot{\varphi} ) ,
\end{eqnarray*}
where $ \dot{\varphi} ,\dot{\phi} \in T_\omega \mathcal{B}_{M,L^k} $. It follows that $\gamma^* {G}_{Ca, M,L^k} = {G}_{Ca, M,\gamma^* L^k} $. By a similar argument, one can obtain $\gamma^* {G}_{Ma, M,L^k} = {G}_{Ma, M,\gamma^* L^k} $.
\end{proof}

Before proving Theorem {\ref{thmexampleCPN}}, we need the following lemma.

\begin{lmm}
    \label{counterequivariantlmm}
Let $ M $ be an $n$-dimensional projective manifold, and let $L$ be a very ample line bundle on $M$. Assume that there exist a subgroup $\Gamma \leq \mathrm{Aut} (M) $ and $\omega \in \mathcal{B}_{M,L^k} $ such that $\gamma^* L\cong L $, $\forall \gamma\in\Gamma $, and $\Gamma \omega =\{ \gamma^* \omega :\;\; \gamma\in\Gamma \} $ is an infinite discrete subset of $ \mathcal{B}_{M,L^k} $. Then the Calabi volume $\mu_{Ca} ( \mathcal{B}_{M,L^k} ) $ and the Mabuchi volume $\mu_{Ma} ( \mathcal{B}_{M,L^k} )$ are infinite for each $k$.
\end{lmm}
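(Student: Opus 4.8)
The strategy is to turn the hypothesis into an \emph{isometric self-action} of $\Gamma$ on the finite-dimensional Riemannian manifold $(\mathcal{B}_{M,L^k},G)$, where $G$ stands for either the Calabi metric $G_{Ca,M,L^k}$ or the Mabuchi metric $G_{Ma,M,L^k}$, and then to argue that an isometric action with an infinite discrete orbit forces the total Riemannian volume to be infinite.

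First I would upgrade the hypothesis $\gamma^* L\cong L$ to an honest identification $\mathcal{B}_{M,\gamma^* L^k}=\mathcal{B}_{M,L^k}$. Indeed $\gamma^* L^k=(\gamma^* L)^{\otimes k}\cong L^{\otimes k}=L^k$; fixing a line bundle isomorphism $\psi:\gamma^* L^k\to L^k$ and comparing local trivializations, one sees that for any basis $\{T_i\}_{i=0}^{N_k}$ of $H^0(M,\gamma^* L^k)$ the embedding $M\to\mathbb{C}P^{N_k}$ it defines agrees pointwise with the one defined by the basis $\{\psi\circ T_i\}_{i=0}^{N_k}$ of $H^0(M,L^k)$, because passing from $T_i$ to $\psi\circ T_i$ multiplies all homogeneous coordinates by the same nonvanishing local factor. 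Hence $\mathcal{B}_{M,\gamma^* L^k}$ and $\mathcal{B}_{M,L^k}$ coincide as subsets of $\mathcal{K}_{[\omega_0]}$, and therefore so do the restrictions of the Calabi and of the Mabuchi metrics: $G_{Ca,M,\gamma^* L^k}=G_{Ca,M,L^k}$ and $G_{Ma,M,\gamma^* L^k}=G_{Ma,M,L^k}$. Combined with Proposition \ref{propactioninv}, this shows that every $\gamma\in\Gamma$ acts on $\mathcal{B}_{M,L^k}$ as an isometry of $(\mathcal{B}_{M,L^k},G_{Ca,M,L^k})$ and of $(\mathcal{B}_{M,L^k},G_{Ma,M,L^k})$; in particular, writing $\dist$ for the Riemannian distance on the connected manifold $\mathcal{B}_{M,L^k}$, we have $\dist(\gamma^*\alpha,\gamma^*\beta)=\dist(\alpha,\beta)$, and $\gamma^*$ carries the metric ball $B_r(\omega)$ bijectively onto $B_r(\gamma^*\omega)$, preserving $G$-volume.

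Next I would record two elementary facts about $(\mathcal{B}_{M,L^k},G)$. By Section \ref{matrix}, $\mathcal{B}_{M,L^k}$ is a finite-dimensional manifold, diffeomorphic to $SU(N_k+1)\backslash SL(N_k+1,\mathbb{C})$; and $G_{Ca,M,L^k},G_{Ma,M,L^k}$ are genuine, positive-definite Riemannian metrics on it, since $\int_M(\Delta_\omega\dot\varphi)^2\,dV_\omega=0$ or $\int_M(\dot\varphi-\dashint_M\dot\varphi\,dV_\omega)^2\,dV_\omega=0$ forces $\dot\varphi$ to be constant, i.e. $\dot\varphi=0$ in $C^\infty(M,\mathbb{R})/\mathbb{R}$. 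Hence the associated volume measure $\mu_G$ gives positive mass to every nonempty open subset. Now, since $\Gamma\omega$ is discrete, fix $r>0$ with $B_r(\omega)\cap\Gamma\omega=\{\omega\}$. I claim the balls $\{B_{r/2}(\eta):\eta\in\Gamma\omega\}$ are pairwise disjoint: if $B_{r/2}(\gamma_1^*\omega)\cap B_{r/2}(\gamma_2^*\omega)\neq\emptyset$ with $\gamma_1^*\omega\neq\gamma_2^*\omega$, then $\dist(\gamma_1^*\omega,\gamma_2^*\omega)<r$, and applying the isometry $(\gamma_1^{-1})^*=(\gamma_1^*)^{-1}$ gives $\dist(\omega,(\gamma_2\gamma_1^{-1})^*\omega)<r$, so $(\gamma_2\gamma_1^{-1})^*\omega\in B_r(\omega)\cap\Gamma\omega=\{\omega\}$, whence $\gamma_2^*\omega=\gamma_1^*\omega$, a contradiction. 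Each $B_{r/2}(\eta)$ has $G$-volume $v:=\mu_G(B_{r/2}(\omega))>0$ (being an isometric image of $B_{r/2}(\omega)$), and $\Gamma\omega$ is infinite, so picking distinct $\eta_1,\eta_2,\dots\in\Gamma\omega$,
\[
\mu_G(\mathcal{B}_{M,L^k})\;\geq\;\sum_{j=1}^{\infty}\mu_G\big(B_{r/2}(\eta_j)\big)\;=\;\sum_{j=1}^{\infty}v\;=\;\infty .
\]
Taking $G=G_{Ca,M,L^k}$ and $G=G_{Ma,M,L^k}$ gives $\mu_{Ca}(\mathcal{B}_{M,L^k})=\mu_{Ma}(\mathcal{B}_{M,L^k})=\infty$ for each $k$.

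I expect the only genuinely delicate point to be the first step: promoting $\gamma^* L^k\cong L^k$ to an equality of Bergman spaces together with equality of the restricted Calabi and Mabuchi metrics, so that Proposition \ref{propactioninv} becomes a statement about an isometric self-action of $(\mathcal{B}_{M,L^k},G)$ rather than a map between two a priori different spaces. Once that identification is in hand, the rest --- extracting from the discreteness hypothesis an infinite family of pairwise disjoint metric balls all of the same positive volume --- is routine.
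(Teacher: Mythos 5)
Your proposal is correct and takes essentially the same route as the paper: the paper likewise uses the hypothesis $\gamma^*L\cong L$ together with Proposition \ref{propactioninv} to view each $\gamma^*$ as a self-isometry of $(\mathcal{B}_{M,L^k},G)$, and then covers the infinite discrete orbit by pairwise disjoint metric balls of equal positive volume. You simply spell out details the paper leaves implicit (the identification $\mathcal{B}_{M,\gamma^*L^k}=\mathcal{B}_{M,L^k}$, positive-definiteness of the restricted metrics, and the triangle-inequality argument for disjointness of the balls).
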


\begin{proof}
    Since ${G}_{Ca, M,L^k} $ is a smooth Riemannian metric on $\mathcal{B}_{M,L^k} $, one can find a constant $\epsilon >0 $ such that the metric ball $B_{10\epsilon } (\omega ) $ is precompact in $\mathcal{B}_{M,L^k} $. By the discreteness of $\Gamma \omega $, we can assume that $B_{10\epsilon } (\omega ) \cap \Gamma \omega = \emptyset $. Note that for any $\gamma\in\Gamma $, $\gamma^* : \mathcal{B}_{M,L^k} \to \mathcal{B}_{M,L^k} $ is isometric. Hence
    $$ \mu_{Ca} ( \mathcal{B}_{M,L^k} ) \geq \mu_{Ca} \left( \cup_{\gamma\in\Gamma} B_\epsilon (\gamma^* \omega ) \right) = \sum_{\gamma^* \omega } \mu_{Ca} \left( B_\epsilon ( \omega ) \right) = \infty .$$
    By a similar argument, we can conclude that $\mu_{Ma} ( \mathcal{B}_{M,L^k} ) =\infty  $, and the proof is complete.
\end{proof}

We are ready to prove Theorem {\ref{thmexampleCPN}} now. Let us begin with the special case $M_1 = *$.

\begin{prop} \label{propCPNexample}
 Let $ M = \mathbb{C}P^{n } $, and $L = \mathcal{O} (1) $. Then for any $k\in\mathbb{N}$ and $\omega \in \mathcal{B}_{M,L^k} ) $, there exists a subgroup $\Gamma \leq \Aut (M) \cong GL (n+1 ,\mathbb{C}) $ such that $\Gamma \omega =\{ \gamma^* \omega : \gamma\in\Gamma \} $ is an infinite discrete subset of $ \mathcal{B}_{M,L^k} $. Moreover, for each $k\in\mathbb{N}$, $\mu_{Ca} ( \mathcal{B}_{M,L^k} ) = \mu_{Ma} ( \mathcal{B}_{M,L^k} ) =\infty $.
\end{prop}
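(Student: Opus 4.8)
The plan is to deduce everything from Lemma \ref{counterequivariantlmm}. Since $\Aut(\mathbb{C}P^n)=PGL(n+1,\mathbb{C})$ fixes the hyperplane class in $H^2(\mathbb{C}P^n;\mathbb{Z})$, one automatically has $\gamma^*\mathcal{O}(1)\cong\mathcal{O}(1)$ for every $\gamma\in\Aut(M)$, so it is enough to produce, for each $k$ and each $\omega\in\mathcal{B}_{M,L^k}$, a subgroup $\Gamma\leq PGL(n+1,\mathbb{C})$ whose orbit $\Gamma\omega$ is an infinite discrete subset of $\mathcal{B}_{M,L^k}$; the equality of volumes then follows at once.

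First I would describe the $\Aut(M)$-action in the matrix model of Section \ref{matrix}. Identifying $H^0(\mathbb{C}P^n,\mathcal{O}(k))$ with the space $V_k$ of homogeneous polynomials of degree $k$ in $z_0,\dots,z_n$, together with its monomial basis $\{z^\alpha\}_{|\alpha|=k}$ (so $N_k+1=\binom{n+k}{n}$), the linear action of $GL(n+1,\mathbb{C})$ on $z_0,\dots,z_n$ induces a map $\rho_k\colon GL(n+1,\mathbb{C})\to GL(V_k)=GL(N_k+1,\mathbb{C})$ that is compatible with the Veronese embedding $\mathbb{C}P^n\hookrightarrow\mathbb{C}P^{N_k}$. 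Unwinding the definition of $\omega_A$ shows that for $\gamma\in\Aut(M)$ the embedding $F_A\circ\gamma$ is the one attached to the matrix $A\,\rho_k(\gamma)$, hence $\gamma^*\omega_A=\omega_{A\rho_k(\gamma)}$. Combined with Proposition \ref{propSUSLmapBMLK}, this says that under the diffeomorphism $SU(N_k+1)\backslash SL(N_k+1,\mathbb{C})\cong\mathcal{B}_{M,L^k}$, pulling back by $\gamma$ becomes right multiplication by the $SL(N_k+1,\mathbb{C})$-representative of $\rho_k(\gamma)$.

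Next I would take the diagonal element $\gamma_0:=[\,\mathrm{diag}(2,1,\dots,1)\,]\in PGL(n+1,\mathbb{C})$ and set $\Gamma:=\langle\gamma_0\rangle$. Then $g:=\rho_k(\gamma_0)$ is diagonal in the monomial basis with entries $2^{\alpha_0}$, and since $n\geq1$, $k\geq1$ the exponent $\alpha_0$ runs over all of $\{0,1,\dots,k\}$; after rescaling $g$ to determinant $1$ the ratio of its largest to its smallest entry is still $2^k>1$. Fix $\omega=\omega_A$ with $SL$-representative $B$, and use the standard identification of $SU(N_k+1)\backslash SL(N_k+1,\mathbb{C})$ with the space $\mathcal{P}$ of positive-definite Hermitian matrices of determinant $1$ via $[C]\mapsto C^*C$; then $\gamma_0^{*m}\omega$ corresponds to $P_m=(Bg^m)^*(Bg^m)=g^m(B^*B)g^m\in\mathcal{P}$. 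A short Rayleigh-quotient estimate gives $\kappa(P_m)\geq\kappa(B^*B)^{-1}\,2^{2k|m|}$ for the condition number $\kappa$, so $\kappa(P_m)\to\infty$ as $|m|\to\infty$. Since $\{P\in\mathcal{P}:\kappa(P)\leq R\}$ is compact for every $R>0$ (its spectrum lies in $[1/R,R]$), the set $\{P_m:m\in\mathbb{Z}\}$ leaves every compact subset of $\mathcal{P}$, hence is infinite and discrete. Therefore $\Gamma\omega$ is an infinite discrete subset of $\mathcal{B}_{M,L^k}$, and Lemma \ref{counterequivariantlmm} yields $\mu_{Ca}(\mathcal{B}_{M,L^k})=\mu_{Ma}(\mathcal{B}_{M,L^k})=\infty$ for every $k$.

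I expect the only delicate point to be the bookkeeping in the second step—checking that pullback of Bergman metrics by automorphisms really is right translation in the $SU(N_k+1)\backslash SL(N_k+1,\mathbb{C})$ parametrization, with the transpose/inverse and left/right conventions consistent. After that the argument rests on the elementary fact that a non-central diagonalizable element with positive eigenvalues drives every point of the symmetric space $\mathcal{P}$ off to infinity, which the condition-number inequality makes quantitative.
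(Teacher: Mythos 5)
Your argument is correct, and it reaches the conclusion by a genuinely different route in the one step that carries all the content, namely exhibiting a subgroup with infinite discrete orbit. Both you and the paper reduce to Lemma \ref{counterequivariantlmm} and both rest on the same structural observation that pullback by $\gamma\in\Aut(\mathbb{C}P^n)$ acts on the parametrization $SU(N_k+1)\backslash SL(N_k+1,\mathbb{C})\cong\mathcal{B}_{M,L^k}$ by right multiplication by the induced matrix on $H^0(\mathbb{C}P^n,\mathcal{O}(k))$ (the paper verifies this by the explicit computation $\gamma_m^*\omega=\Psi(\Phi(RA_m))$; your bookkeeping concern is exactly the point that computation settles, and since your $\Gamma$ is cyclic the homomorphism-versus-antihomomorphism ambiguity is harmless). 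The divergence is in the choice of element and the discreteness proof: the paper takes the unipotent shears $\gamma_m=I_{n+1}+mE_{n-1,n}$, works in the upper-triangular (QR) coordinates $\mathcal{T}_{N_k}$, and isolates the orbit by tracking the single entry $r'_{N_k-1,N_k,m}=r_{N_k-1,N_k}-mr_{N_k-1,N_k-1}$, which marches off to infinity because $r_{N_k-1,N_k-1}>0$; you take the semisimple element $\mathrm{diag}(2,1,\dots,1)$, identify the quotient with the symmetric space of determinant-one positive Hermitian matrices via $[C]\mapsto C^*C$, and show the orbit $P_m=g^m(B^*B)g^m$ escapes every compact set because $\kappa(P_m)\geq\kappa(B^*B)^{-1}2^{2k|m|}$ and the sublevel sets of the condition number are compact. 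Your estimate is right (test $P_m$ against the eigenvectors of $g$ realizing $2^{\alpha_0}$ with $\alpha_0=k$ and $\alpha_0=0$, which exist since $n,k\geq1$), and properness of $\kappa$ does give that the orbit is infinite and discrete. What each approach buys: the paper's is entirely elementary linear algebra in explicit coordinates but depends on the lexicographic ordering of the monomial basis and the special entry $S_{N_k-1}=Z_{n-1}Z_n^{k-1}$; yours is coordinate-free on the symmetric space, makes the mechanism transparent (any non-unitary element of $\Aut(M,L)$ acting with unbounded powers drives every point of $\mathcal{P}$ to infinity), and therefore generalizes more readily to the remark that the theorem holds whenever $\Aut(M,L)$ is non-compact.
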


\begin{proof}
 Let $( Z_0 : Z_1 : \cdots : Z_n )$ be the homogeneous coordinate of $M$. Then $\{ Z^{k_0}_0 \cdots Z_n^{k_n} \}_{k_0 +\cdots +k_n =k }$ gives a basis of $H^0 (M,L^k )$, and hence $ N_k +1 = \dim H^0 (M,L^k ) = \frac{(n+k )!}{n! k!} $. 
 
 By induction on $d\in\mathbb{N}$, one can construct a lexicographic order on $\mathbb{Z}_{\geq 0}^d$ such that $(p_1 ,\cdots ,p_d) > (q_1 ,\cdots ,q_d) $ if and only if:
 \begin{itemize}
     \item $p_d > q_d$, or
     \item $d>1$, $p_d =q_d $, and $(p_1 ,\cdots ,p_{d-1}) > (q_1 ,\cdots ,q_{d-1}) $.
 \end{itemize}

Let $S_i = Z^{k_{0,i}}_0 \cdots Z_n^{k_{n,i}} $, $i=0,1,\cdots ,N_k $, be an arrangement of the basis $\{ Z^{k_0}_0 \cdots Z_n^{k_n} \}_{k_0 +\cdots +k_n =k }$ of $H^0 (M,L^k )$ such that $i\geq j$ if and only if $(k_{0,i} ,\cdots ,k_{n,i}) \geq (k_{0,j} ,\cdots ,k_{n,j}) $.

Hence we have $S_{N_{k}} = Z_n^k $ and $ S_{N_{k} -1} = Z_{n-1} Z_{n}^{k-1} $. Let $R= (r_{i,j})_{0\leq i,j \leq N_k} = \Phi^{-1} ( \Psi^{-1} (\omega) ) $ be the upper triangular matrix in (\ref{finalparametrimap}), and
\begin{eqnarray}
    \Gamma = \left\lbrace \gamma_m = I_{n+1} + m E_{n-1,n} :\; m\in\mathbb{Z} \right\rbrace \leq GL (n+1 ,\mathbb{C}) \cong \Aut (M) ,
\end{eqnarray}
where $I_{n+1} =  (\delta_{i,j})_{0\leq i,j \leq n} $, $E_{n-1,n} = (\delta_{i,n-1} \delta_{j,n } )_{0\leq i,j \leq n} $, and $\delta_{k,l}$ is the Kronecker symbol.

By definition, we have $\gamma^*_m (Z_{i}) = Z_i - m \delta_{i,n-1} Z_n $, and hence for $0\leq i < j\leq N_k $, there exists $a_{i,j,m} \in\mathbb{C} $ such that $\gamma^*_m (S_{i}) = S_i + \sum\limits_{0\leq i < j\leq N_k} a_{i,j,m} S_j $. In particular, $a_{N_{k} -1 ,N_k} = -m $. Write $A_m = (\delta_{i,j} + a_{i,j,m} )_{0\leq i,j \leq N_k} $. Fix a Hermitian metric $h$ on $L$ such that $\omega_h = \Ric (h) >0 $. Clearly, $ A_m =A^m_1$. Since $R= (r_{i,j})_{0\leq i,j \leq N_k} = \Phi^{-1} ( \Psi^{-1} (\omega) )$, we have
\begin{eqnarray*}
    \gamma_m^* \omega & = & \gamma_m^* \Psi (\Phi (R) ) \\
    & = & \gamma_m^* \left( \frac{1}{2\pi} \omega_h + \frac{\sqrt{-1}}{2k\pi}  \partial\partialbar \log \left( \sum_{i=0}^{N_k} \left\| \sum_{j=i}^{N_k} r_{i,j} S_j \right\|_{h^k}^2 \right) \right) \\
    & = & \frac{1}{2\pi} \gamma_m^*\omega_h + \frac{\sqrt{-1}}{2k\pi} \partial\partialbar \log \left( \sum_{i=0}^{N_k} \left\| \sum_{j=i}^{N_k} r_{i,j} \gamma_m^* S_j \right\|_{\gamma_m^* h^k}^2 \right) \\
    & = & \frac{1}{2\pi} \omega_h + \frac{\sqrt{-1}}{2k\pi} \partial\partialbar \log \left( \sum_{i=0}^{N_k} \left\| \sum_{j=i}^{N_k} \sum_{l=j}^{N_k} r_{i,j} (\delta_{j,l} + a_{j,l,m} ) S_l \right\|_{ h^k}^2 \right) \\
    & = & \Psi ( \Phi ( RA_m) ) .
\end{eqnarray*}
Note that $\omega_A$ is independent on the choice of $h$.

Since for any $m\in\mathbb{Z}$, $A_m$ is an upper triangular matrix with all elements on the diagonal equal to $1$, we observe that $RA_m\in SL(N_k +1 ,\mathbb{C})$ is also an upper triangular matrix, and the elements on the diagonal of $R-RA_m$ are all equal to $0$. Hence the elements on the diagonal of $ RA_m \in SL(N_k +1 ,\mathbb{C}) $ are all positive. By the argument in Section \ref{matrix}, $\{RA_m  \}_{m\in\mathbb{Z}}$ derives a discrete subset of $SU (N_k +1) \backslash SL(N_k +1 ,\mathbb{C})$ if and only if $\{RA_m  \}_{m\in\mathbb{Z}}$ is a discrete subset of $SL(N_k +1 ,\mathbb{C})$.

It is sufficient to show that $\{RA_m  \}_{m\in\mathbb{Z}}$ is a discrete subset of $SL(N_k +1 ,\mathbb{C})$ now. Write $RA_m = (r'_{i,j,m})_{0\leq i,j \leq N_k} $. Since $a_{N_{k} -1 ,N_k} = -m $, we have $r'_{N_{k} -1 ,N_k,m} = r_{N_{k} -1 ,N_k} - mr_{N_{k} -1 ,N_k -1} $. Then $ r_{N_{k} -1 ,N_k -1} >0$ implies that for any $m_0 \in\mathbb{Z} $, there exists an open neighborhood 
$$U_{m_0} = \left\{ B= (b_{i,j})_{0\leq i,j \leq N_k}:\;\; \left| b_{N_{k} -1 ,N_k } - r_{N_{k} -1 ,N_k } + m_0 r_{N_{k} -1 ,N_k -1 } \right| \leq \frac{r_{N_{k} -1 ,N_k -1 }}{2} \right\} $$ 
of $R$ such that $U_{m_0} \cap \{RA_m  \}_{m\neq m_0} = \emptyset $. 

It follows that $\{RA_m  \}_{m\in\mathbb{Z}}$ is a discrete subset of $SL(N_k +1 ,\mathbb{C})$, and hence $\Gamma \omega =\{ \gamma^* \omega : \gamma\in\Gamma \} $ is an infinite discrete subset of $ \mathcal{B}_{M,L^k} $. By Lemma \ref{counterequivariantlmm}, one can conclude that the Calabi volume $\mu_{Ca} ( \mathcal{B}_{M,L^k} ) $ and the Mabuchi volume $\mu_{Ma} ( \mathcal{B}_{M,L^k} ) $ are infinite for each $k$. This is our assertion.
\end{proof}

We now consider Theorem \ref{thmexampleCPN} in the case $L=\pi_1^* L_1 \otimes \pi_2^* \mathcal{O} (k_0) $ with $k_0\in\mathbb{N}$, where $\pi_1 : M\to M_1$ and $\pi_2 :M\to \mathbb{C}P^n $ are the projections.

\begin{thm} \label{thmexampleCPNproductL}
Let $M_1$ be a projective manifold (which can be a single point), and $L_1$ be a very ample line bundle on $M_1$. Consider $M=M_1 \times \mathbb{C}P^n $ for some $n\in\mathbb{N}$, and $L=\pi_1^* L_1 \otimes \pi_2^* \mathcal{O} (k_0 ) $ with $k_0 \in\mathbb{N} $, where $\pi_1 : M\to M_1$ and $\pi_2 :M\to \mathbb{C}P^n $ are the projections. Then for each $k$, $\mu_{Ca} ( \mathcal{B}_{M,L^k} ) = \mu_{Ma} ( \mathcal{B}_{M,L^k} ) =\infty $.
\end{thm}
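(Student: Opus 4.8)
The plan is to imitate the proof of Proposition~\ref{propCPNexample}: exhibit a subgroup $\Gamma \le \Aut(M)$ and a metric $\omega \in \mathcal{B}_{M,L^k}$ satisfying the hypotheses of Lemma~\ref{counterequivariantlmm}, using automorphisms that act by unipotent transformations on the $\mathbb{C}P^n$ factor and trivially on $M_1$. First I would record the structure of the sections. Since $L^k = \pi_1^* L_1^k \otimes \pi_2^* \mathcal{O}(kk_0)$, the K\"unneth formula gives $H^0(M, L^k) \cong H^0(M_1, L_1^k) \otimes H^0(\mathbb{C}P^n, \mathcal{O}(kk_0))$. Pick a basis $\{T_\alpha\}$ of the first factor, and let $\{S_0, \dots, S_\nu\}$, $\nu+1 = \binom{n+kk_0}{n}$, be the monomial basis of the second in homogeneous coordinates $(Z_0:\cdots:Z_n)$, arranged in the lexicographic order of Proposition~\ref{propCPNexample} so that $S_\nu = Z_n^{kk_0}$ and $S_{\nu-1} = Z_{n-1}Z_n^{kk_0-1}$. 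Then $\{T_\alpha \otimes S_i\}$ is a basis of $H^0(M, L^k)$; I would order it by declaring $T_\alpha \otimes S_i \preceq T_\beta \otimes S_j$ whenever $S_i <_{\mathrm{lex}} S_j$, with ties broken by an arbitrary fixed order on the $T_\alpha$'s.

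Next I would introduce $\Gamma = \{\gamma_m : m \in \mathbb{Z}\}$ with $\gamma_m = \mathrm{id}_{M_1} \times g_m$, where $g_m \in \Aut(\mathbb{C}P^n)$ is induced by $I_{n+1} + m E_{n-1,n}$. Because $\pi_1 \circ \gamma_m = \pi_1$, $\pi_2 \circ \gamma_m = g_m \circ \pi_2$, and $g_m^* \mathcal{O}(1) \cong \mathcal{O}(1)$, one has $\gamma_m^* L \cong L$, so the hypothesis $\gamma^* L \cong L$ of Lemma~\ref{counterequivariantlmm} holds. On $H^0(M, L^k)$ the map $\gamma_m^*$ acts as $\mathrm{id} \otimes g_m^*$; since $g_m^* Z_{n-1} = Z_{n-1} - m Z_n$, exactly as in Proposition~\ref{propCPNexample} one gets $g_m^* S_i = S_i + \sum_{l >_{\mathrm{lex}} i} a_{i,l,m} S_l$ with $a_{\nu-1,\nu,m} = -m$. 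Hence the matrix $A_m$ of $\gamma_m^*$ in the ordered basis above is upper triangular with $1$'s on the diagonal, satisfies $A_m = A_1^m$, and carries the entry $-m$ in the slot indexed by $(T_{\alpha_0}\otimes S_{\nu-1},\ T_{\alpha_0}\otimes S_\nu)$ for any fixed $\alpha_0$.

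With this in hand, I would fix an arbitrary $\omega \in \mathcal{B}_{M,L^k}$, write $R = \Phi^{-1}(\Psi^{-1}(\omega)) \in \mathcal{T}_{N_k}$, and transcribe the computation of Proposition~\ref{propCPNexample} — using that $\omega_A$ does not depend on the auxiliary Hermitian metric $h$ on $L$, so that the $\gamma_m^* \omega_h$ term is harmless — to obtain $\gamma_m^* \omega = \Psi(\Phi(RA_m))$. Since $RA_m$ is upper triangular with the same positive diagonal as $R$ and determinant $1$, it already lies in $\mathcal{T}_{N_k}$, requiring no Gram--Schmidt correction. The entry of $RA_m$ in the slot $(T_{\alpha_0}\otimes S_{\nu-1},\ T_{\alpha_0}\otimes S_\nu)$ then equals $-m\, r_{(T_{\alpha_0}\otimes S_{\nu-1}),(T_{\alpha_0}\otimes S_{\nu-1})} + (\text{const})$, an affine function of $m$ whose slope is a positive diagonal entry of $R$. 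Thus $\{RA_m\}_{m\in\mathbb{Z}}$ is an infinite discrete subset of $SL(N_k+1,\mathbb{C})$, so $\Gamma\omega$ is an infinite discrete subset of $\mathcal{B}_{M,L^k}$, and Lemma~\ref{counterequivariantlmm} finishes the proof.

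I do not expect a genuine obstacle: the argument is a bookkeeping-level transcription of Proposition~\ref{propCPNexample}. The two points needing care are (i) choosing the basis order on the K\"unneth decomposition so that $\gamma_m^*$ is unipotent and upper triangular with an off-diagonal entry linear in $m$, and (ii) confirming that the identity $\gamma_m^* \omega_R = \omega_{RA_m}$ persists in the presence of the $M_1$ factor — which it does, because $\gamma_m$ is trivial on $M_1$ and $\omega_A$ is independent of $h$. (To deduce the more general Theorem~\ref{thmexampleCPN} directly, the only extra input is $\mathrm{Pic}(M_1 \times \mathbb{C}P^n) = \mathrm{Pic}(M_1)\times\mathbb{Z}$, so that any very ample $L$ has the form $\pi_1^* A \otimes \pi_2^* \mathcal{O}(k_0)$ with $k_0 \ge 1$ and $H^0(M_1, A^k)\ne 0$, after which the same unipotent family applies.)
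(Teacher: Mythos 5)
Your proposal is correct, but it follows a genuinely different route from the paper. The paper fixes the particular Bergman metric associated to a basis of $H^0(M,L^k)$, restricts it to a slice $\{x\}\times\mathbb{C}P^n$ via $\mathfrak{F}_x$, uses a Gram--Schmidt argument to show that $\mathfrak{F}_x^*\omega$ is itself a Bergman metric in $\mathcal{B}_{\mathbb{C}P^n,\mathcal{O}(kk_0)}$, and then invokes Proposition \ref{propCPNexample} \emph{on the slice}: since $\mathfrak{F}_x^*$ intertwines the action of $\Gamma_M=\mathrm{id}_{M_1}\times\Gamma$ with that of $\Gamma$ and is continuous, discreteness of $\Gamma(\mathfrak{F}_x^*\omega)$ forces discreteness of $\Gamma_M\omega$. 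You instead work directly on $M$: the K\"unneth decomposition $H^0(M,L^k)\cong H^0(M_1,L_1^k)\otimes H^0(\mathbb{C}P^n,\mathcal{O}(kk_0))$, with the product basis ordered primarily by the lexicographic order on monomials, makes $\mathrm{id}\otimes g_m^*$ unipotent upper triangular with the entry $-m$ in the slot $(T_{\alpha_0}\otimes S_{\nu-1},T_{\alpha_0}\otimes S_\nu)$, and the QR computation of Proposition \ref{propCPNexample} transcribes verbatim. Your approach is self-contained (no restriction map, no Gram--Schmidt on the slice) and applies to an arbitrary $\omega\in\mathcal{B}_{M,L^k}$; its only cost is the K\"unneth bookkeeping and the tacit verification that the $(p,q)$-entry of $RA_m$ is genuinely affine in $m$ --- the monomials $Z_{n-1}^aZ_n^{kk_0-a}$ with $a\ge 2$, which could a priori contribute terms in $m^a$, precede $S_{\nu-1}$ in your ordering, so the corresponding entries of the upper-triangular $R$ vanish. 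This is exactly the same implicit step the paper takes in Proposition \ref{propCPNexample}, so your argument is at the paper's own level of rigor; both proofs are sound.
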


\begin{proof}
Let $h_1$ and $h_2$ be Hermitian metrics on $L_1$ and $L_2 = \mathcal{O} (k_0 )$, respectively. Assume that $\omega_{h_1} = \Ric (h_1) $ and $\omega_{h_2} = \Ric (h_2) $ are K\"ahler metrics on $M_1$ and $ \mathbb{C}P^n $, respectively. Then $h=\pi_1^* h_1 \otimes \pi_2^* h_2$ is a Hermitian metrics on $L$ such that $\omega_h = \Ric (h) = \pi_1^* \omega_{h_1} + \pi_2^* \omega_{h_2} $ is a K\"ahler metrics on $M $. Fix $x\in M_1$, and let $ \mathfrak{F}_x $ denote the natural embedding $\mathbb{C}P^{n} \cong \{ x \} \times \mathbb{C}P^{n} \to M $. Clearly, $ \mathfrak{F}_x^* (L,h ) \cong (\mathcal{O} (1) , h_2 ) $, and $ \mathfrak{F}^*_x \omega = \Ric (h_2) = \omega_{h_2} $.

Let $\{ S_i \}_{i=0}^{N_k} $ be a basis of $ H^0 (M,L^k) $, and let $\omega $ denotes the Bergman metric
$$\omega = \frac{1}{2\pi} \omega_h + \frac{\sqrt{-1}}{2k\pi}  \partial\partialbar \log \left( \sum_{i=0}^{N_k} \left\| S_i \right\|_{h^k}^2 \right) \in \mathcal{B}_{M,L^k} .$$

Let $\{ S'_i \}_{i=0}^{N'_k} $ be a basis of $ H^0 (\mathbb{C}P^n,\mathcal{O} (k k_0 )) $. Then we can find constants $a_{i,j } \in\mathbb{C} $ such that $\mathfrak{F}_x^* S_i = \sum_{j=0}^{N'_k} a_{i,j } S'_j $, $0\leq i\leq N_k$, $0\leq j\leq N'_k$. Hence we can use the Gram-Schmidt process to show that for any $0\leq j,l\leq N'_k$, there exist $b_{j,l} \in\mathbb{C} $ such that 
$$ \left( \sum_{i=1}^{N_k} \Vert S_i \Vert_{h^k}^2 \right) \circ \mathfrak{F}_x = \sum_{j=0}^{N'_k} \left\Vert \sum_{l=0}^{N'_k} b_{j,l } S'_l \right\Vert_{h_2^k}^2 .$$
It follows that
\begin{eqnarray*}
    \mathfrak{F}^*_x \omega & = & \mathfrak{F}^*_x \left( \frac{1}{2\pi} \omega_h + \frac{\sqrt{-1}}{2k\pi}  \partial\partialbar \log \left( \sum_{i=0}^{N_k} \left\| S_i \right\|_{h^k}^2 \right) \right) \\
    & = & \frac{1}{2\pi} \omega_{h_2} + \frac{\sqrt{-1}}{2k\pi}  \partial\partialbar \log \left( \left( \sum_{i=0}^{N_k} \left\| S_i \right\|_{h^k}^2 \right) \circ \mathfrak{F}_x \right) \\
    & = & \frac{1}{2\pi} \omega_{h_2} + \frac{\sqrt{-1}}{2k\pi}  \partial\partialbar \log \left( \sum_{j=0}^{N'_k} \left\Vert \sum_{l=0}^{N'_k} b_{j,l } S'_l \right\Vert_{h_2^k}^2 \right) \in \mathcal{B}_{\mathcal{C}P^{n }, \mathcal{O}( k_0)^k} = \mathcal{B}_{\mathcal{C}P^{n }, \mathcal{O}( kk_0 )} .
\end{eqnarray*}

Let $\Gamma $ be the subgroup of $\Aut (\mathbb{C}P^n ) \cong GL (n+1 ,\mathbb{C})$ in Proposition \ref{propCPNexample} satisfying that $\Gamma (\mathfrak{F}^*_x \omega ) $ is an infinite discrete subset of $\mathcal{B}_{\mathcal{C}P^{n }, \mathcal{O}( kk_0 )} $. Now we consider the group 
$$ \Gamma_M = \mathrm{id}_{M_1} \times \Gamma = \{ (\mathrm{id}_{X_1} ,\gamma ) :\; \; \gamma\in\Gamma \} \leq \Aut (X ) .$$
It is easy to see that $\gamma \mapsto (\mathrm{id}_{M_1} , \gamma ) $ gives a natural isomorphism $\Gamma \to \Gamma_M $. For any $\gamma\in\Gamma $, we have
\begin{eqnarray*}
    \mathfrak{F}^*_x (\mathrm{id}_{M_1} , \gamma )^* \omega & = & \gamma^* \mathfrak{F}^*_x \omega ,
\end{eqnarray*}
and hence $\mathfrak{F}^*_x \left(\Gamma_M \omega \right) = \Gamma (\mathfrak{F}^*_x \omega ) $ is an infinite discrete subset in $\mathcal{B}_{\mathcal{C}P^{n }, \mathcal{O}( kk_0 )}$. It follows that $ \Gamma_M \omega $ is an infinite discrete subset in $\mathcal{B}_{M,L^k}$. Now Lemma \ref{counterequivariantlmm} implies that for each $k\in\mathbb{N}$, $\mu_{Ca} ( \mathcal{B}_{M,L^k} ) = \mu_{Ma} ( \mathcal{B}_{M,L^k} ) =\infty $, which proves the theorem.
\end{proof}

Now we show that the ample line bundle $L$ on $M$ is always isometric to $ L_1 \otimes \pi_2^* \mathcal{O} (k_0) $ for some $k_0 \in\mathbb{N} $, where $\pi_1 : M\to M_1$ and $\pi_2 :M\to \mathbb{C}P^n $ are the projections. The following lemma is known, but we give the proof here for convenience. See also \cite[Lemme 11]{cosa1}.

\begin{lmm} \label{lemmadecomposition}
    Let $M_1 ,M_2 $ be K\"ahler manifolds. Assume that $H^1 (M_2 ,\mathbb{C}) = 0 $. Then the natural homomorphism
    $$\pi_1^* \otimes \pi_2^* : H^1 (M_1 ,\mathcal{O}^* ) \oplus H^1 (M_2 ,\mathcal{O}^* ) \to H^1 (M_1 \times M_2 ,\mathcal{O}^* ) $$
    is an isomorphism, where $\pi_i : M_1 \times M_2 \to M_i $ is the projection, $i=1,2$. In particular, if $L$ is an ample line bundle on $M_1\times \mathbb{C}P^n$, then $L$ is isometric to $ L_1 \otimes \pi_2^* \mathcal{O} (k_0) $ for some $k_0 \in\mathbb{N} $, where $\pi_1 : M\to M_1$ and $\pi_2 :M\to \mathbb{C}P^n $ are the projections.
\end{lmm}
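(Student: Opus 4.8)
The plan is to run the exponential sheaf sequence $0 \to \mathbb{Z} \to \mathcal{O} \to \mathcal{O}^* \to 0$ in parallel on $M_1$, $M_2$ and $M := M_1 \times M_2$, and to compare the three long exact sequences via the pullback maps $\pi_1^*$, $\pi_2^*$, reducing the statement to a five-lemma argument. We may assume, as in the application, that $M_1, M_2$ are compact and connected (this is what makes Hodge theory and the K\"unneth formula available). For any such $X$ the exponential sequence gives a five-term exact sequence
$$ 0 \to H^1(X,\mathbb{Z}) \to H^1(X,\mathcal{O}) \to H^1(X,\mathcal{O}^*) \xrightarrow{c_1} H^2(X,\mathbb{Z}) \to H^2(X,\mathcal{O}), $$
where left-exactness uses that $\exp : H^0(X,\mathcal{O}) = \mathbb{C} \to H^0(X,\mathcal{O}^*) = \mathbb{C}^*$ is onto. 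Taking the direct sum of these sequences for $X = M_1$ and $X = M_2$ and mapping it to the one for $X = M$ by $\pi_1^* \oplus \pi_2^*$ (which on Picard groups is $(L_1,L_2) \mapsto \pi_1^* L_1 \otimes \pi_2^* L_2$) produces a commuting ladder of exact sequences whose middle vertical arrow is the map in question; by the five lemma it suffices to show the two $H^1$-columns are isomorphisms, the $H^2(\cdot,\mathbb{Z})$-column is an isomorphism, and the $H^2(\cdot,\mathcal{O})$-column is injective.

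These four maps are controlled by the K\"unneth formula together with the hypothesis $H^1(M_2,\mathbb{C}) = 0$. First, $H^1(M_2,\mathbb{Z})$ is always torsion-free (universal coefficients, since $H_0$ is free) and has rank $b_1(M_2) = 0$, hence vanishes; and by Hodge theory $H^1(M_2,\mathcal{O}) \cong H^{0,1}(M_2)$ is a direct summand of $H^1(M_2,\mathbb{C}) = 0$, hence also vanishes. Since $H^1(M_1,\mathbb{Z})$ is finitely generated and torsion-free, every $\mathrm{Tor}$-term in the topological K\"unneth formula in degrees $1$ and $2$ vanishes, and the cross-terms $H^1(M_1,\mathbb{Z})\otimes H^1(M_2,\mathbb{Z})$ and $H^1(M_1,\mathcal{O})\otimes_{\mathbb{C}} H^1(M_2,\mathcal{O})$ vanish because of $H^1(M_2,-) = 0$. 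Thus $\pi_1^*$ identifies $H^1(M_1,-)$ with $H^1(M,-)$ and $\pi_1^*\oplus\pi_2^*$ identifies $H^2(M_1,-)\oplus H^2(M_2,-)$ with $H^2(M,-)$, for both $- = \mathbb{Z}$ (topological K\"unneth) and $- = \mathcal{O}$ (coherent K\"unneth, $H^q(M_1\times M_2,\mathcal{O}) = \bigoplus_{i+j=q} H^i(M_1,\mathcal{O})\otimes_{\mathbb{C}} H^j(M_2,\mathcal{O})$, using compactness). Feeding this into the five lemma yields the asserted isomorphism $H^1(M_1,\mathcal{O}^*)\oplus H^1(M_2,\mathcal{O}^*) \cong H^1(M,\mathcal{O}^*)$.

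For the final assertion I apply the first part with $M_2 = \mathbb{C}P^n$, which satisfies $H^1(\mathbb{C}P^n,\mathbb{C}) = 0$: an ample line bundle $L$ on $M = M_1\times\mathbb{C}P^n$ decomposes as $L \cong \pi_1^* L_1 \otimes \pi_2^* L_2$, where $L_1 = L|_{M_1\times\{\mathrm{pt}\}} \in \mathrm{Pic}(M_1)$ and $L_2 = L|_{\{\mathrm{pt}\}\times\mathbb{C}P^n} \in \mathrm{Pic}(\mathbb{C}P^n) = \mathbb{Z}\cdot\mathcal{O}(1)$, so $L_2 = \mathcal{O}(k_0)$ for some $k_0 \in \mathbb{Z}$; since the restriction of an ample bundle to a submanifold is ample and $\mathcal{O}(k_0)$ is ample on $\mathbb{C}P^n$ exactly when $k_0 > 0$, we get $k_0 \in \mathbb{N}$. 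I expect the only real subtlety to be bookkeeping — checking commutativity and exactness of the ladder, confirming the vanishing of every $\mathrm{Tor}$- and cross-term in the two K\"unneth formulas, and invoking Hodge theory in the form $H^{0,1}(M_2) \hookrightarrow H^1(M_2,\mathbb{C})$ — rather than any genuinely hard step; in particular one must be a little careful that the identifications produced by K\"unneth are precisely the pullback maps $\pi_i^*$, which is what makes the ladder commute.
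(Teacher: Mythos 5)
Your proposal is correct and follows essentially the same route as the paper: compare the exponential sheaf sequences on the factors and on the product via $\pi_1^*\oplus\pi_2^*$, and use the K\"unneth theorem (topological for $H^*(\cdot,\mathbb{Z})$, Hodge/Dolbeault for $H^1(\cdot,\mathcal{O})$) to identify the outer columns. If anything yours is the more watertight version: by keeping the full five-term exact sequence ending in $H^2(\cdot,\mathbb{Z})\to H^2(\cdot,\mathcal{O})$ and invoking the five lemma, you avoid the paper's truncation of the third term to the torsion-free image $H^2(\cdot,\mathbb{Z})\cap H^2(\cdot,\mathbb{C})$, a truncation under which the displayed sequence is no longer exact at $H^1(\cdot,\mathcal{O}^*)$ once torsion line bundles are present.
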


\begin{proof}
   By considering the long exact sequence derived from the exponential sequence on $M_i $,
   \[\begin{tikzcd}
0 \arrow{r}{ } & \mathbb{Z}_{M_i} \arrow{r}{2\pi \sqrt{-1} } & \mathcal{O}_{M_i} \arrow{r}{\exp } & \mathcal{O}^*_{M_i} \arrow{r}{ } & 0 ,
\end{tikzcd}
\]
$i=1,2$, one can obtain the exact sequence
\[\begin{tikzcd}
H^1 ( M_i ,\mathcal{O}_{M_i} ) \arrow{r}{\exp } & H^1 (M_i , \mathcal{O}^*_{M_i} ) \arrow{r}{c_1 } & H^2 (M_i , \mathbb{Z} ) \cap H^2 (M_i ,\mathbb{C}) ,
\end{tikzcd}
\]
where $H^2 (M_i , \mathbb{Z} ) \cap H^1 (M_2 ,\mathbb{C}) $ is the image of the morphism $\mathfrak{i}_{M_i} : H^2 (M_i , \mathbb{Z} ) \to H^2 (M_i ,\mathbb{C}) $ given by the inclusion map $\mathbb{Z} \to \mathbb{C} $. Clearly, $\mathrm{ker} (\mathfrak{i}_{M_i} ) = H^2 (M_i , \mathbb{Z} )_{tor} $ is the torsion part of $H^2 (M_i , \mathbb{Z} ) $.

Then the projections $\pi_i : M_1 \times M_2 \to M_i $, $i=1,2$, yield the following commutative diagram
\[\begin{tikzcd}
\oplus_{i=1}^{2} H^1 ( M_i ,\mathcal{O}_{M_i} ) \arrow{r}{\exp} \arrow{d}{\varphi} & \oplus_{i=1}^{2} H^1 ( M_i ,\mathcal{O}^*_{M_i} )\arrow{r}{c_1} \arrow{d}{\pi_1^* \otimes \pi_2^*} & \oplus_{i=1}^{2} H^2 ( M_i ,\mathbb{Z} ) \cap H^2 (M_i ,\mathbb{C}) \arrow{d}{\psi} \\
 H^1 ( M_1 \times M_2 ,\mathcal{O}_{M_1 \times M_2} ) \arrow{r}{\exp} & H^1 ( M_1 \times M_2 ,\mathcal{O}^*_{M_1 \times M_2} ) \arrow{r}{c_1} & H^2 ( M_1 \times M_2 ,\mathbb{Z} ) \cap H^2 ( M_1 \times M_2 ,\mathbb{C} ) .
\end{tikzcd}
\]

By the K\"unneth theorem for $H^2 ( M_1 \times M_2 ,\mathbb{Z} ) $ \cite[Theorem 3B.6]{aha1}, one can see that $ \psi$ is an isomorphism. Note that $H^1 (M_2 ,\mathbb{Z}) = 0 $ and $H^0 (M_i ,\mathbb{Z}) \cong \mathbb{Z} $, $i=1,2$. Similarly, one can apply the K\"unneth theorem for $ H^1 ( M_1 \times M_2 ,\mathcal{O}_{M_1 \times M_2} ) \cong H^{0,1} ( M_1 \times M_2 ,\mathbb{C} ) $ \cite[p.103 ($\ast $)]{gh1} to show that $\varphi $ is also an isomorphism. Thus, the homomorphism
$$\pi_1^* \otimes \pi_2^* : H^1 (M_1 ,\mathcal{O}^* ) \oplus H^1 (M_2 ,\mathcal{O}^* ) \to H^1 (M_1 \times M_2 ,\mathcal{O}^* ) $$
is an isomorphism, which precisely concludes the lemma.
\end{proof}

Now we return to the proof of Theorem {\ref{thmexampleCPN}}.

\vspace{0.2cm}

\noindent \textbf{Proof of Theorem {\ref{thmexampleCPN}}:}
Let $M_1$ be a projective manifold (which can be a single point), $M=M_1 \times \mathbb{C}P^n $ for some $n\in\mathbb{N}$, and $L $ be a very ample line bundle on $M$. Then Lemma \ref{lemmadecomposition} implies that $ L \cong \pi^* L_1 \otimes \pi_2^* \mathcal{O} (k_0) $ for some $k_0 \in\mathbb{N} $, where $\pi_1 : M\to M_1$ and $\pi_2 :M\to \mathbb{C}P^n $ are the projections. Hence one can apply Theorem \ref{thmexampleCPNproductL} to show that the Calabi volume $\mu_{Ca} ( \mathcal{B}_{M,L^k} ) $ and the Mabuchi volume $\mu_{Ma} ( \mathcal{B}_{M,L^k} ) $ are infinite for each $k$. This completes the proof of Theorem {\ref{thmexampleCPN}}. \qed



\begin{thebibliography}{99}

\bibitem{calabi1} E. Calabi:
\emph{The variation of K\"ahler metrics},
Bull. Am. Math. Soc. $\mathbf{60}$ (1954), 167-168.

\bibitem{calabi2} E. Calabi:
\emph{The space of K\"ahler metrics},
Proceedings of the International Congress of Mathematicians (Amsterdam, 1954), vol. $\mathbf{2}$, pp. 206-207. Noordhoff, Groningen, and North-Holland, Amsterdam, 1956.

\bibitem{calamai1} S. Calamai:
\emph{The Calabi metric for the space of K\"ahler metrics},
Math. Ann. $\mathbf{353}$ (2012), 373–402.

\bibitem{cat1} D.~Catlin:
\emph{The Bergman kernel and a theorem of Tian},
Analysis and geometry in several complex variables (Katata, 1997), 1–23, Trends Math., Birkhäuser Boston, Boston, MA, 1999.

\bibitem{cosa1} J.-L. Colliot-Th\'el\`ene, J.-J. Sansuc:
\emph{La R-\'equivalence sur les tores},
Ann. Sci. École Norm. Sup. $\mathbf{10}$ (1977), 175-229.

\bibitem{dailiuma1} X.-Z. Dai, K.-F. Liu, X.-N.~Ma:
\emph{On the asymptotic expansion of Bergman kernel},
J. Differential Geom. $\mathbf{72}$ (2006), 1–41.

\bibitem{dm1} J.-P.~Demailly:
\emph{Complex analytic and differential geometry},
preprint, 2012,
\\ \href{https://www-fourier.ujf-grenoble.fr/~demailly/manuscripts/agbook.pdf}{https://www-fourier.ujf-grenoble.fr/~demailly/manuscripts/agbook.pdf}

\bibitem{don1} S.~Donaldson:
\emph{Scalar curvature and projective embeddings. I},
J. Differential Geom. $\mathbf{59}$ (2001), 479–522.

\bibitem{gh1} P.~Griffiths, J.~Harris:
\emph{Principles of algebraic geometry},
Pure and Applied Mathematics. Wiley-Interscience, New York, 1978.

\bibitem{aha1} A.~Hatcher:
\emph{Algebraic topology},
Cambridge University Press, Cambridge, 2002.

\bibitem{hojo1} R. A. Horn, C. R. Johnson:
\emph{Matrix analysis},
2nd edition, Cambridge University Press, Cambridge, 2013.

\bibitem{liuzql1} C.-J. Liu, Z.-Q.~Lu:
\emph{Abstract Bergman kernel expansion and its applications},
Trans. Amer. Math. Soc. $\mathbf{368}$ (2016), 1467–1495.

\bibitem{mamari1} X.-N.~Ma, G. Marinescu:
\emph{Holomorphic Morse inequalities and Bergman kernels},
Progress in Mathematics, 254. Birkh\"auser Verlag, Basel, 2007.

\bibitem{mabuchi1} T. Mabuchi:
\emph{Some symplectic geometry on compact K\"ahler manifolds. I},
Osaka J. Math. $\mathbf{24}$ (1987), 227–252.

\bibitem{wdr1} W.-D.~Ruan:
\emph{Canonical coordinates and Bergman metrics},
Comm. Anal. Geom. $\mathbf{6}$ (1998), 589–631.

\bibitem{shifzel1} B. Shiffman, S. Zelditch:
\emph{Stochastic Kähler geometry: from random zeros to random metrics},
preprint. arXiv:2303.11559 (2023).

\bibitem{tgbook1} G.~Tian:
\emph{Canonical metrics in K\"ahler geometry},
Notes taken by Meike Akveld. Lectures in Mathematics ETH Z\"urich, Birkh\"auser Verlag, Basel, 2000.

\bibitem{tg1} G.~Tian:
\emph{On a set of polarized K\"ahler metrics on algebraic manifolds},
J. Differ. Geom. $\mathbf{32}$ (1990), 99–130.

\bibitem{kwz2} K.-W.~Zhang:
\emph{A quantization proof of the uniform Yau-Tian-Donaldson conjecture},
J. Eur. Math. Soc. (2023).

\bibitem{sz1} S.~Zelditch:
\emph{Szeg\"o kernels and a theorem of Tian},
Internat. Math. Res. Notices. (1998), 317–331.

\end{thebibliography}
 \end{document}